\def\resetMathstrut@{%
  \setbox\z@\hbox{%
    \mathchardef\@tempa\mathcode`\(\relax
    \def\@tempb##1"##2##3{\the\textfont"##3\char"}%
    \expandafter\@tempb\meaning\@tempa \relax
  }%
  \ht\Mathstrutbox@1.2\ht\z@ \dp\Mathstrutbox@1.2\dp\z@
}
\newtheorem{theorem}{Theorem}[section]
\newtheorem{claim}[theorem]{Claim}
\newtheorem{corollary}[theorem]{Corollary}
\newtheorem{definition}[theorem]{Definition}
\newcommand{\R}{\ensuremath{\mathbb{R}}}
\newcommand{\eps}{\varepsilon}
\renewcommand{\epsilon}{\varepsilon}
\renewcommand{\leq}{\leqslant}
\renewcommand{\geq}{\geqslant}
\newcommand{\E}{\mathbb{E}}
  \newcommand{\st}{:\,} 
  \DeclareMathOperator{\sign}{sign}
  \newcommand{\poly}{\mathit{poly}}
\begin{document}
\title{The Fourier Transform of Restrictions of Functions on the Slice}
\author{Shravas Rao \thanks{Northwestern University}}
\date{\today}

\maketitle

\begin{abstract}
This paper considers the Fourier transform over the slice of the Boolean hypercube.
We prove a relationship between the Fourier coefficients of a function over the slice, and the Fourier coefficients of its restrictions.
As an application, we prove a Goldreich-Levin theorem for functions on the slice based on the Kushilevitz-Mansour algorithm for the Boolean hypercube.
\end{abstract}

\section{Introduction}

Boolean functions, or functions over $\{0, 1\}^n$, are central to computer science.
One tool that has found many uses is Fourier analysis, which involves defining an orthonormal basis separate from the standard basis, but with many useful properties.
One such property is that this basis consists of eigenvectors of the Boolean hypercube, the graph whose vertices are $\{0, 1\}^n$ with an edge between two vectors $u$ and $v$ if they differ in exactly one coordinate.
Fourier analysis over the Boolean hypercube has led to many applications, including in learning theory, property testing, pseudorandomness, voting theory, and more.
For a more complete treatment, see~\cite{O14}.

In recent years, much attention has been drawn to functions on the slice,  $\binom{[n]}{k} = \{ v \mid v \in \{0, 1\}^n, \sum_{i=1}^n v_i = k\}$ or the subset of $\{0, 1\}^n$ that consists of vectors with exactly $k$ $1$'s for some fixed $k$.
Like in the case of the Boolean hypercube, one can also define an orthonormal basis separate from the standard basis, over functions on the slice.
In particular, one can consider the eigenvectors of the Johnson graph, the graph whose vertices are elements of the slice, with an edge between two vectors $u$ and $v$ if they differ in exactly two coordinates.
Thus, this basis can be seen as an analogue of the Fourier transform, but for the slice.

Such a basis was given explicitly in~\cite{F16} and~\cite{S11}, and this basis has subsequently found many applications in analogues of major theorems for the Boolean hypercube, but now for the slice.
These include Friedgut's theorem~\cite{F16}, various versions of the invariance principle~\cite{FM19, FKMW18}, the Kindler-Safra theorem~\cite{KK20}, and a theorem due to Nisan and Szegedy regarding juntas~\cite{FI19}.
In this paper, we continue this line of work of proving analogues for the slice.

Fourier analysis on the hypercube has been particularly useful when considering restrictions of functions.
In particular, if $f: \{0, 1\}^n \rightarrow \R$ is a function, and $z \in \{0, 1\}^t$ for $t \leq n$, we let its restriction, $f_{z}: \{0, 1\}^{n-t} \rightarrow \R$, be the function defined by $f_z(x) = f(x \circ z)$ where $\circ$ denotes concatenation.
In particular, one can relate the Fourier coefficients of $f$ with the Fourier coefficients of $f_z$.

Recall that the orthonormal basis for the Fourier transform can be indexed by subsets of the set $[n]$.
Then for any $t$ and any $S \subseteq [n-t]$,
\begin{equation}\label{eq:cubestruct}
\sum_{T \in [n] \backslash [n-t]} \widehat{f}(S \cup T)^2 = \sum_{z \in \{0, 1\}^t} \widehat{f_z}(S)^2
\end{equation}
where $\widehat{f}(S)$ denotes the projection onto the vector indexed by $S$ (see Corollary 3.22 in~\cite{O14}).

In the case of the slice, one can also define a restriction, where given $f: \binom{[n]}{k} \rightarrow \R$ and $z \in \{0, 1\}^t$ for $t \leq n$, we let $f_{z}: \binom{n-t}{k-|z|} \rightarrow \R$ be defined by $f_z(x) = f(x \circ z)$, where $|z| = |\{i \st z_i = 1\}|$ denotes the Hamming weight of $z$.
If $z$ contains more than $k$ $1$'s, then $f_{z}$ is an empty function.
In this work, we show that an identity similar to Eq.~\eqref{eq:cubestruct} holds for the slice.

In the slice, the orthonormal basis for the Fourier transform is now indexed by subsets of $[n]$ with a certain property.
Let $\mathcal{H}_{n, \min\{k, n-k\}}$ be the collection of subsets $S = \{s_1, s_2, \ldots, s_i\}$ of $[n]$ of size at most $k$ such that $s_j \geq 2j$ for all $j$, when the elements of $S$ are listed in sorted order.
Then the vectors in the orthogonal basis from~\cite{F16} and~\cite{S11} can be indexed by elements of $\mathcal{H}_{n, \min\{k, n-k\}}$.
We note that a vector indexed by the set $S$ in the orthogonal basis for the slice differs significantly from the vector indexed by the same set in the orthogonal basis for the Boolean hypercube.

The main result of this paper is the following theorem, which generalizes Eq.~\eqref{eq:cubestruct} to the slice.
\begin{theorem}\label{thm:structure}
Let $f: \binom{[n]}{k} \rightarrow \R$, and let $i \leq k$.

Then for any $t$ and any $S \subseteq [n-t]$,
\[
\sum_{z \in \{0, 1\}^t} \widehat{f_z}(S)^2 = \sum_{T \subseteq [n]\backslash [n-t]} \widehat{f}(S \cup T)^2
\]
\end{theorem}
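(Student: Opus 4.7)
My plan is to reinterpret both sides of the identity as $\|Pf\|^2$, the squared norm of an orthogonal projection of $f$ onto a subspace of $L^2\!\left(\binom{[n]}{k}\right)$, and then to show that these two subspaces coincide.

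First I would introduce an extension operator. For each $z \in \{0,1\}^t$, let $E_z$ map a function $h$ on $\binom{[n-t]}{k-|z|}$ to the function on $\binom{[n]}{k}$ defined by $(E_zh)(y) = h(x)$ when $y = x\circ z$ and $0$ otherwise (with the convention $E_z h = 0$ when the source slice is empty). Setting $g_{S,z} = E_z\psi_S^{(n-t,k-|z|)}$, unfolding definitions gives $\widehat{f_z}(S) = \langle f, g_{S,z}\rangle$ in the counting inner product. Since the supports of $g_{S,z}$ and $g_{S,z'}$ are disjoint for $z \ne z'$, and $E_z$ is an isometry, the family $\{g_{S,z}\}_{z \in \{0,1\}^t}$ is orthonormal. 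Hence $\sum_z \widehat{f_z}(S)^2 = \|P_W f\|^2$ where $W = \spn\{g_{S,z}\}$; analogously, $\sum_T \widehat{f}(S\cup T)^2 = \|P_V f\|^2$ where $V = \spn\{\psi_{S\cup T}^{(n,k)}\}$. The theorem is thus equivalent to the subspace equality $V = W$.

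The second step is to reduce to $t = 1$ by induction on $t$. Writing $z = z'\circ z_n$ with $z' \in \{0,1\}^{t-1}$ gives $f_z = (f_{z_n})_{z'}$. Applying the inductive hypothesis to each $f_{z_n}$, regarded as a function on $\binom{[n-1]}{k-z_n}$ with restriction parameter $t-1$, followed by the $t=1$ case applied to $f$ at each index $S\cup T'$ (to sum over $z_n$), yields the general identity. Thus it suffices to establish
\[
\widehat{f_0}(S)^2 + \widehat{f_1}(S)^2 = \widehat{f}(S)^2 + \widehat{f}(S\cup\{n\})^2 \qquad \text{for all } S \subseteq [n-1].
\]

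The main obstacle is this base case. Both $V = \spn\{\psi_S^{(n,k)}, \psi_{S\cup\{n\}}^{(n,k)}\}$ and $W = \spn\{g_{S,0}, g_{S,1}\}$ are at most two-dimensional, so showing $V = W$ reduces to a \emph{restriction formula}: each of $\psi_S^{(n,k)}$ and $\psi_{S\cup\{n\}}^{(n,k)}$, restricted to the sub-slice $\{y : y_n = 0\}$, is a scalar multiple of $\psi_S^{(n-1,k)}$, and its restriction to $\{y : y_n = 1\}$ is a scalar multiple of $\psi_S^{(n-1,k-1)}$. Verifying these identities directly from the explicit Filmus--Srinivasan construction of the basis (which is built by a Gram--Schmidt-like recursion on the index $S$, and in which the elements indexed by a set $S$ not containing $n$ are expected to be ``flat'' in the $n$-th coordinate apart from a slice-dependent scalar) is where the real combinatorial work lies. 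Once the restriction formula is in place, the equality $V = W$ follows by dimension counting, and Parseval on the common subspace completes the proof.
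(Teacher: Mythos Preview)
Your proposal is correct and follows essentially the same route as the paper: reduce to $t=1$ by induction, then establish the ``restriction formula'' for the Filmus--Srinivasan basis. The paper's Theorem~\ref{thm:restrict} is exactly that restriction formula---it computes the explicit $2\times 2$ orthogonal matrix taking $(\widehat{f_0}(S),\widehat{f_1}(S))$ to $(\widehat{f}(S),\widehat{f}(S\cup\{n\}))$ (equivalently, the scalars expressing $\psi_S^{(n,k)}$ and $\psi_{S\cup\{n\}}^{(n,k)}$ in your basis $\{g_{S,0},g_{S,1}\}$), using the recursive definition of $\chi_{n,S}$ and the norm formula of Claim~\ref{claim:norm}; your subspace equality $V=W$ and the sum-of-squares identity are then immediate, and the paper's Corollary~\ref{cor:structure} records exactly this.
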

Again, $\widehat{f}(S)$ denotes the projection onto the vector indexed by $S$.
For convenience, we will let $\widehat{f}(S) = 0$ if $S$ is not a subset in $\mathcal{H}_{n, \min\{k, n-k\}}$ or $f$ is an empty function.

Eq.~\eqref{eq:cubestruct} has many applications, one of which includes the Goldreich-Levin theorem~\cite{GL89, KM93}.
Informally, this theorem says that if the Fourier coefficients of a function are concentrated on just a few vectors, then the function can be learned efficiently.
As an application of Theorem~\ref{thm:structure}, we show that a similar theorem applies in the case of the slice, following the Kushilevitz-Mansour algorithm.

\begin{theorem}\label{thm:glslice}
Given query access to a function $f \in {\binom{[n]}{k}} \rightarrow \{-1, 1\}$ and $0 < \tau \leq 1$, there is a $\poly(n, 1/\tau)$-time algorithm that with high probability outputs a list $L = \{U_1, \ldots, U_{\ell}\}$ of subsets of $[n]$ such that
\begin{itemize}
\item $|\widehat{f}(U)| \geq \tau \cdot \sqrt{\binom{n}{k}}$ implies that $U \in L$
\item $U \in L$ implies that $|\widehat{f}(U)| \geq \tau/2 \cdot \sqrt{\binom{n}{k}}$
\end{itemize}
\end{theorem}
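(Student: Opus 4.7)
The plan is to adapt the Kushilevitz--Mansour bucketing algorithm to the slice, using Theorem~\ref{thm:structure} in place of the hypercube identity~\eqref{eq:cubestruct}. I would maintain a list of \emph{buckets} indexed by pairs $(t, S)$ with $0 \le t \le n$ and $S \subseteq [n-t]$, whose weights are
\[
W(t, S) \;=\; \sum_{T \subseteq [n] \setminus [n-t]} \widehat{f}(S \cup T)^2 \;=\; \sum_{z \in \{0,1\}^t} \widehat{f_z}(S)^2,
\]
with the second equality being Theorem~\ref{thm:structure}. The algorithm begins with the single bucket $(n, \emptyset)$ of weight $\|f\|_2^2 = \binom{n}{k}$, and at each iteration splits every surviving bucket $(t, S)$ into its two children $(t-1, S)$ and $(t-1, S \cup \{n-t+1\})$; after estimating the weight of each child, any bucket with estimated weight below $\tau^2 \binom{n}{k}/2$ is discarded. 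Since the total retained weight never exceeds $\binom{n}{k}$, at most $O(1/\tau^2)$ buckets survive at each of the $n$ levels. After $n$ iterations the surviving buckets have $t = 0$ and correspond to single coefficients $\widehat{f}(S)^2$, giving the output list.

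The key step is to estimate $W(t, S)$ efficiently. Expanding $\widehat{f_z}(S) = \sum_x f_z(x) v_S(x)$ and changing variables yields
\[
W(t, S) \;=\; \sum_{\substack{u, v \in \binom{[n]}{k}\\ u_{>n-t} \,=\, v_{>n-t}}} f(u) f(v)\, v_S(u_{\le n-t})\, v_S(v_{\le n-t}),
\]
where $u_{\le n-t}$ and $u_{>n-t}$ denote the restrictions of $u$ to its first $n-t$ and last $t$ coordinates, and $v_S$ is the basis vector from~\cite{F16, S11} on the relevant subslice, evaluable in $\poly(n)$ time. I would use the unbiased estimator
\[
X \;=\; \binom{n}{k}\binom{n-t}{k-|u_{>n-t}|}\, f(u) f(v)\, v_S(u_{\le n-t})\, v_S(v_{\le n-t}),
\]
with $u$ drawn uniformly from $\binom{[n]}{k}$ and $v$ drawn uniformly subject to $v_{>n-t} = u_{>n-t}$. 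Using $f \in \{-1,1\}$, the orthonormality relation $\sum_x v_S(x)^2 = 1$ on each subslice, and Vandermonde's identity, a direct computation gives $\E[X^2] \le \binom{n}{k}^2$. Chebyshev's inequality combined with median-of-means amplification then shows that $\poly(n, 1/\tau)$ independent samples suffice to estimate $W(t, S)$ to within additive error $\tau^2 \binom{n}{k}/4$ with high probability.

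A union bound over the $O(n/\tau^2)$ buckets ever estimated ensures that with high probability every estimate is accurate to within $\tau^2 \binom{n}{k}/4$. Consequently any $U$ with $\widehat{f}(U)^2 \ge \tau^2 \binom{n}{k}$ lies in a bucket whose true weight is at least $\tau^2 \binom{n}{k}$, so the bucket survives pruning at every level and $U$ appears in the output; conversely, every output $U$ has estimated weight at least $\tau^2 \binom{n}{k}/2$ and hence true weight at least $\tau^2 \binom{n}{k}/4$, giving $|\widehat{f}(U)| \ge (\tau/2) \sqrt{\binom{n}{k}}$. The step I expect to require the most care is the second-moment bound for the estimator, which relies on the explicit descriptions of the slice basis in~\cite{F16, S11} and on the correct normalization conventions; once these are in place the rest of the analysis is a routine adaptation of Kushilevitz--Mansour.
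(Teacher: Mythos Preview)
Your overall strategy—Kushilevitz--Mansour bucketing, with Theorem~\ref{thm:structure} replacing~\eqref{eq:cubestruct} to identify bucket weights with sums over restrictions—is exactly the paper's approach, and the correctness/size analysis of the bucket tree is identical.

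The one genuine difference is the estimator for $W(t,S)$. The paper samples the restriction $z$ with probability $\binom{n-t}{k-|z|}/\binom{n}{k}$ and then, crucially, samples $y_1,y_2$ inside the subslice \emph{proportionally to the absolute values of the basis vector} $P^{\uparrow}\chi_{n-t,S}$; together with Claim~\ref{claim:l1vsl2} this makes the resulting random variable bounded by $1$ (after normalizing by $\binom{n}{k}$), so a Chernoff bound applies directly. You instead sample $u$ and $v$ uniformly within the subslice and rely on the second-moment bound $\E[X^2]\le\binom{n}{k}^2$, which indeed follows from $\sum_x v_S(x)^2=1$ on each subslice, and then use Chebyshev plus median-of-means. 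Both estimators give $\poly(n,1/\tau)$ sample complexity; your version avoids the need to sample from the nonuniform distribution $|v_S|/\|v_S\|_1$, at the cost of a slightly worse $1/\tau^4$ dependence versus the paper's $1/\tau^2$ inside the estimation subroutine. Either way the result is the same, and your flagged concern—evaluating $v_S$ at a point in $\poly(n)$ time and handling the normalization across subslices (including the degenerate cases where $S\notin\mathcal{H}_{n-t,\min\{k-|z|,n-t-k+|z|\}}$ and $\widehat{f_z}(S)=0$)—is shared by both approaches.
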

\subsection{Future Work}

It would be interesting to see if Theorem~\ref{thm:structure} leads to other analogues of theorems for the Boolean hypercube to the slice.

It would also be interesting to see to what extent the orthonormal basis for slice can be extended to high-dimensional expanders, and additionally, if there exist analogues of theorems for the Boolean hypercube on high dimensional expanders.

Similarly to how expander graphs can be thought of as sparse approximations of the complete graph, high dimensional expanders can be thought of as sparse approximations of the Johnson graph.
In recent years, many constructions of high dimensional expanders have been developed~\cite{LSV05a, LSV05b, KO18, CTZ20}.
The theory of constructions of high dimensional expanders have already led to many applications in theoretical computer science, including mixing times of Markov chains~\cite{ALOV19, AL20, CGM21} and coding theory~\cite{AJQST20, DHKNT21}.

Unlike regular expander graphs, one has control beyond just the second eigenvalue, as shown in work by Kaufman and Oppenheim~\cite{KO20}.
In particular, the spectrum of high dimensional expanders approximates the spectrum of the Johnson graph.
Both Kaufman and Oppenheim, and work by Dikstein, Dinur, Filmus, and Harsha~\cite{DDFH18} give different decompositions into approximate eigenspaces.
The latter allows one to obtain an analogue of the Freidgut-Kalai-Naor theorem for high dimensional expanders.
It would be interesting to obtain more explicit descriptions of approximate eigenvectors in these approximate eigenspaces, that might allow one to prove more analogues for high dimensional expanders.
In particular, one hope is that the structure of the orthonormal basis for the slice can used as inspiration to construct an approximately orthonormal basis for high dimensional expanders, which can then be used to prove analogues of theorems for the Boolean hypercube, but now for high dimensional expanders.

\section{Preliminaries}

Denote by $[n]$ the set $\{1, 2, \ldots, n\}$ and by $\binom{[n]}{k}$ the set $\{ v \mid v \in \{0, 1\}^n, \sum_{i=1}^n v_i = k\}$ or the subset of $\{0, 1\}^n$ that consists of vertices with exactly $k$ $1$'s for some fixed $k$.

Given a string $x \in \{0, 1\}^n$, denote by $|x|$ the Hamming weight of $x$, that is, $|x| = \sum_{i=1}^n x_i$.

As stated in the introduction, we denote by $\mathcal{H}_{n, k}$ the set of subsets $S = \{s_1, s_2, \ldots, s_i\}$ of $[n]$ of size at most $k$ such that $s_j \geq 2j$ for all $j$, when the elements of $S$ are listed in sorted order.
Note that by Bertrand's ballot problem, there are $\binom{n}{|S|}-\binom{n}{|S|-1}$ sets in $\mathcal{H}_{n, k}$ of size $|S|$, and thus $|\mathcal{H}_{n, k}| = \binom{n}{k}$.

Given a vector $v \in \R^{N}$ we define the $\ell_{p}$-norm by
\[
\|v\|_{p}^p = \sum_{i=1}^N |v_i|^p .
\]
We define the inner product for two vectors $u, v \in \R^{N}$ to be
\[
\langle u, v\rangle = \sum_{i=1}^{N}u_i v_i.
\]
It will often be convenient to index vectors by vectors rather than integers, that is, to let $v$ be a vector in $\R^{\binom{[n]}{k}}$ for some $n$ and $k$.
It will often be convenient to identify vectors as functions, that is, to represent $v \in \R^{n}$ as $f_v: [n] \rightarrow \R$, or $v \in \R^{\binom{[n]}{k}}$ as $f_v: {\binom{[n]}{k}} \rightarrow \R$ where in both cases, one obtains $f_v$ from $v$ via $f_v(x) = v_x$.

By the Cauchy-Schwarz inequality, one can relate the $\ell_1$-norm of a vector with its $\ell_2$-norm as follows.
\begin{claim}\label{claim:l1vsl2}
For all $v \in \R^{N}$,
\[
\|v\|_{1} \leq \sqrt{N}\|v\|_{2}
\]
\end{claim}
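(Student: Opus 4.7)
The plan is to derive this directly from the Cauchy--Schwarz inequality by pairing the vector $v$ (in absolute values) against the all-ones vector. First I would write $\|v\|_1 = \sum_{i=1}^N |v_i| \cdot 1$, viewing this as the inner product $\langle u, w\rangle$ where $u_i = |v_i|$ and $w_i = 1$ for every $i \in [N]$. Then Cauchy--Schwarz yields $\langle u, w\rangle \leq \|u\|_2 \|w\|_2$, and I would compute $\|w\|_2 = \sqrt{N}$ and $\|u\|_2 = \sqrt{\sum_i |v_i|^2} = \|v\|_2$, giving the claimed bound.

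There is no real obstacle here; the only choices are presentational. One could equally well invoke the power-mean inequality between the arithmetic and quadratic means of the numbers $|v_1|,\ldots,|v_N|$, which after multiplying by $N$ gives exactly $\|v\|_1 \leq \sqrt{N}\,\|v\|_2$. I would stick with the Cauchy--Schwarz formulation since it matches the sentence that introduces the claim in the text.
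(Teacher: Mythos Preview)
Your proposal is correct and matches the paper's approach: the paper does not spell out a proof but simply attributes the claim to the Cauchy--Schwarz inequality, which is exactly the pairing-with-the-all-ones-vector argument you describe.
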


We denote by $I_{N}$ the $\R^{N \times N}$ identity matrix.

\section{An orthonormal basis of eigenvectors for the Johnson graph}

Recall that the Johnson graph is defined by $G = (V, E)$ where $V = \binom{[n]}{k}$ and $E = \left\{(v, u) \mid \sum_{i=1}^n |v_i-u_i| = 2\right\}$.
Let $A_{n, k}$ be the corresponding adjacency matrix.

In this section we give with proof an orthonormal basis of eigenvectors for the Johnson graph.
The claims and theorems in this section will be used in Section~\ref{sec:restrict} to prove Theorem~\ref{thm:structure}.
We stress that all of the results in this section are known, and can be found in the works of Stanley~\cite{S88, S90},  Filmus~\cite{F16}, Srinivasan~\cite{S11}, and Filmus and Lindzey~\cite{FL20}.
When illustrative, however, we give the proofs of the various statements.

We start by defining a collection of matrices that will help yield the eigenvectors of $A_{n, k}$.

Let $P^{\uparrow}_{n, k} \in \R^{\binom{[n]}{k+1} \times \binom{[n]}{k}}$ be the matrix defined by
\begin{equation}
P^{\uparrow}_{n, k}(x, y) = \begin{cases} 1 & \text{if } y_i = 1 \text{ implies } x_i = 1 \\
0 & \text{otherwise}
\end{cases} \label{eq:updef}
\end{equation}
Let $P^{\downarrow}_{n, k+1} = (P^{\uparrow}_{n, k})^*$.

Let
\[
P^{\uparrow}_{n, i, k} = P^{\uparrow}_{n, k-1}P^{\uparrow}_{n, k-1}\cdots P^{\uparrow}_{n, i}
\]
and
\[
P^{\downarrow}_{n, k, i} = P^{\downarrow}_{n, i+1}P^{\downarrow}_{n, i+2}\cdots P^{\downarrow}_{n, k}.
\]
For simplicity, we let $P^{\uparrow}_{n, k, k} = P^{\downarrow}_{n, k, k} = I_{\binom{n}{k}}$.

Let $P^{\vee}_{n, k} = P^{\uparrow}_{n, k-1}P^{\downarrow}_{n, k}$ and $P^{\wedge} = P^{\downarrow}_{n, k+1}P^{\uparrow}_{n, k}$.
The following fact shows that $P^{\vee}_{n, k}$ and $P^{\wedge}_{n, k}$ differ only by a multiple of the identity matrix.
\begin{claim}\label{lem:diffuddu}
$P^{\vee}_{n, k} = P^{\wedge}_{n, k}-(n-2k)I_{\binom{[n]}{k}}$
\end{claim}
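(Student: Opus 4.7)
\medskip

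\noindent\textbf{Proof proposal.} The plan is to compute the entries of $P^{\vee}_{n,k}$ and $P^{\wedge}_{n,k}$ directly and compare them, since both matrices are indexed by $\binom{[n]}{k}\times\binom{[n]}{k}$ and their definitions expand as short sums over an intermediate level.

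First, I would use the definition of $P^{\uparrow}$ (and its adjoint $P^{\downarrow}$) to observe that, viewing elements of $\binom{[n]}{k}$ as sets, $P^{\uparrow}_{n,k}(x,y)=\mathbf{1}[y\subset x]$ and $P^{\downarrow}_{n,k+1}(y,x)=\mathbf{1}[y\subset x]$. Then for $x,y\in\binom{[n]}{k}$,
\[
P^{\vee}_{n,k}(x,y)=\sum_{z\in\binom{[n]}{k-1}}\mathbf{1}[z\subset x]\,\mathbf{1}[z\subset y]=\#\bigl\{z\subset x\cap y\st |z|=k-1\bigr\}=\binom{|x\cap y|}{k-1},
\]
and analogously
\[
P^{\wedge}_{n,k}(x,y)=\sum_{z\in\binom{[n]}{k+1}}\mathbf{1}[x\subset z]\,\mathbf{1}[y\subset z]=\#\bigl\{z\supset x\cup y\st |z|=k+1\bigr\}=\binom{n-|x\cup y|}{k+1-|x\cup y|}.
\]

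Next I would case-split on $|x\cap y|$ (equivalently on $|x\cup y|=2k-|x\cap y|$). If $|x\cap y|<k-1$, both binomials vanish, so the entries agree and are $0$. If $|x\cap y|=k-1$ (i.e.\ $x$ and $y$ are Johnson-adjacent), both entries equal $1$. If $x=y$, then $P^{\vee}_{n,k}(x,x)=\binom{k}{k-1}=k$ while $P^{\wedge}_{n,k}(x,x)=\binom{n-k}{1}=n-k$, so they differ by exactly $n-2k$.

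Putting the cases together gives $P^{\wedge}_{n,k}(x,y)-P^{\vee}_{n,k}(x,y)=(n-2k)\,\mathbf{1}[x=y]$, which is the claimed identity after rearranging. I do not expect a genuine obstacle here: the only step that needs care is the bookkeeping in the two counting arguments (making sure the $z$ of size $k\pm 1$ bracket $x\cap y$ resp.\ $x\cup y$ correctly), but this is a short combinatorial check.
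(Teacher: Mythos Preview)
Your argument is correct: the entrywise computation of $P^{\vee}_{n,k}$ and $P^{\wedge}_{n,k}$ via the common sub/superset counts $\binom{|x\cap y|}{k-1}$ and $\binom{n-|x\cup y|}{k+1-|x\cup y|}$, followed by the three-case comparison on $|x\cap y|$, is exactly the standard verification and goes through as you describe. The paper itself states this claim without proof (it merely cites it as a known fact in the differential-poset framework of Stanley), so there is nothing to compare against; your direct combinatorial check is the natural way to fill in the omitted details.
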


Similarly, one can show that $A_{n, k}$ differs from $P^{\vee}_{n, k}$ and $P^{\wedge}_{n, k}$ by a multiple of the identity matrix.
Thus, for the rest of this paper, we focus on determining an orthonormal basis of eigenvectors for $P^{\vee}_{n, k}$ and $P^{\wedge}_{n, k}$.

The following claim shows that to determine the eigenvectors of $P^{\vee}_{n, k}$ and $P^{\wedge}_{n, k}$, it is enough to determine the nullspace of $P^{\downarrow}_{n, i}$ for all $i \leq k$.
This will be under the assumption that $P^{\downarrow}_{n, i}$ has full rank when $i \leq n/2$, which we will prove in Claims~\ref{claim:characnull} and~\ref{claim:characortho}.

\begin{claim}\label{claim:nulltoeig}
Let $v \in \R^{\binom{[n]}{i}}$ be a vector such that $P^{\downarrow}_{n, i}v = 0$.
Then, 
\[
P^{\vee}_{n, k}P^{\uparrow}_{n, i, k}v =
(n-k-i+1)(k-i)P^{\uparrow}_{n, i, k}v.
\]
and
\[
P^{\wedge}_{n, k}P^{\uparrow}_{n, i, k}v =
(n-k-i)(k-i+1)P^{\uparrow}_{n, i, k}v.
\]
\end{claim}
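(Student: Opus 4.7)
The plan is to reduce the claim to a single intermediate identity, namely
\[
P^{\downarrow}_{n, k}\,P^{\uparrow}_{n, i, k}v \;=\; (n-k-i+1)(k-i)\,P^{\uparrow}_{n, i, k-1}v
\qquad\text{for all }v\in\ker P^{\downarrow}_{n,i},
\]
and prove this by induction on $k-i$. The base case $k=i$ is immediate: the left-hand side is $P^{\downarrow}_{n,i}v=0$, and the right-hand side vanishes because of the factor $(k-i)=0$.

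For the inductive step, factor $P^{\uparrow}_{n,i,k}=P^{\uparrow}_{n,k-1}P^{\uparrow}_{n,i,k-1}$, so the left-hand side becomes $P^{\downarrow}_{n,k}P^{\uparrow}_{n,k-1}\bigl(P^{\uparrow}_{n,i,k-1}v\bigr)$. Here I invoke Claim~\ref{lem:diffuddu} at level $k-1$ in the form $P^{\downarrow}_{n,k}P^{\uparrow}_{n,k-1}=P^{\uparrow}_{n,k-2}P^{\downarrow}_{n,k-1}+(n-2k+2)I$, which commutes a down through an up at the cost of a multiple of the identity. Applying it, the inductive hypothesis on $P^{\downarrow}_{n,k-1}P^{\uparrow}_{n,i,k-1}v$ collapses the first summand to a scalar multiple of $P^{\uparrow}_{n,k-2}P^{\uparrow}_{n,i,k-2}v = P^{\uparrow}_{n,i,k-1}v$, while the identity term already gives a scalar multiple of $P^{\uparrow}_{n,i,k-1}v$. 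A short algebraic simplification of the form $(n-k-i+2)(k-i-1)+(n-2k+2)=(n-k-i+1)(k-i)$ then yields the desired coefficient.

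Once the intermediate identity is established, the two claimed identities drop out formally. For $P^{\vee}_{n,k}$, write $P^{\vee}_{n,k}P^{\uparrow}_{n,i,k}v = P^{\uparrow}_{n,k-1}\bigl(P^{\downarrow}_{n,k}P^{\uparrow}_{n,i,k}v\bigr)$ and apply the intermediate identity, using $P^{\uparrow}_{n,k-1}P^{\uparrow}_{n,i,k-1}=P^{\uparrow}_{n,i,k}$ to get the eigenvalue $(n-k-i+1)(k-i)$. For $P^{\wedge}_{n,k}$, add the correction $(n-2k)P^{\uparrow}_{n,i,k}v$ coming from Claim~\ref{lem:diffuddu}, and simplify $(n-k-i+1)(k-i)+(n-2k)=(n-k-i)(k-i+1)$.

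The only place where something nontrivial happens is the commutation step in the induction, which is handled entirely by Claim~\ref{lem:diffuddu}; the rest is bookkeeping of the constants, and I expect the two small polynomial identities above to be the only points where care is needed.
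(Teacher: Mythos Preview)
Your argument is correct. The paper actually omits the proof of this claim entirely (it is stated without a proof environment, as the authors note these facts are well known from Stanley's work on sequentially differential posets). Your approach---proving the intermediate identity $P^{\downarrow}_{n,k}P^{\uparrow}_{n,i,k}v=(n-k-i+1)(k-i)P^{\uparrow}_{n,i,k-1}v$ by induction on $k-i$ using the commutation relation of Claim~\ref{lem:diffuddu}, and then reading off both eigenvalue statements---is exactly the standard one and is fully rigorous here. The two polynomial identities you flagged are the only nontrivial checks, and both are correct: with $a=n-k-i$ and $b=k-i$ one has $(a+2)(b-1)+(a-b+2)=(a+1)b$ and $(a+1)b+(a-b)=a(b+1)$. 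One cosmetic point: in the base case $k=i$ and in the first inductive step the symbol $P^{\uparrow}_{n,i,k-1}$ (resp.\ $P^{\uparrow}_{n,i,k-2}$) is formally undefined, but since it is multiplied by the vanishing factor $(k-i)$ (resp.\ $(k-i-1)$) this causes no harm; you might say so explicitly.
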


The following claim shows that if $P^{\downarrow}_{n, i}v = 0$, then $P^{\uparrow}_{n, i, k}v = 0$ if $k > n-i$.
Thus, in the case that $k > n/2$, one only needs to consider the nullspace of $P^{\downarrow}_{n, i}$ for $i \leq n-k$.
In fact, we will compute $\displaystyle\frac{\left\|P^{\uparrow}_{n, |S|, k}v\right\|_2^2 }{ \left\|v\right\|_2^2}$ when $P^{\downarrow}_{n, i}v = 0$, which will be useful in Section~\ref{sec:norm}
\begin{claim}\label{claim:normofup}
Let $v \in \R^{\binom{[n]}{i}}$ be a vector such that $P^{\downarrow}_{n, i}v = 0$ for $i \leq n/2$.
Then,
\[
\frac{\left\|P^{\uparrow}_{n, i, k}v\right\|_2^2}{\|v\|_2^2}
=
\frac{(n-2i)!(k-i)!}{(n-k-i)!},
\]
if $k \leq n-i$, and is otherwise $0$.
\end{claim}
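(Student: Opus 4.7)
The plan is to prove this by induction on $k-i$, using the adjoint relationship between $P^{\uparrow}$ and $P^{\downarrow}$ together with Claim~\ref{claim:nulltoeig} to set up a simple recursion for the squared norms. The base case $k=i$ is immediate: $P^{\uparrow}_{n,i,i} = I$, so the left-hand side equals $1$, which agrees with $(n-2i)!\,0!/(n-2i)! = 1$.

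For the inductive step, I will rewrite
\[
\bigl\|P^{\uparrow}_{n,i,k+1} v\bigr\|_2^2
= \bigl\langle P^{\uparrow}_{n,k} P^{\uparrow}_{n,i,k} v,\; P^{\uparrow}_{n,k} P^{\uparrow}_{n,i,k} v\bigr\rangle
= \bigl\langle P^{\downarrow}_{n,k+1}P^{\uparrow}_{n,k} P^{\uparrow}_{n,i,k} v,\; P^{\uparrow}_{n,i,k} v\bigr\rangle
\]
using that $P^{\downarrow}_{n,k+1} = (P^{\uparrow}_{n,k})^*$. By definition $P^{\downarrow}_{n,k+1}P^{\uparrow}_{n,k} = P^{\wedge}_{n,k}$, so Claim~\ref{claim:nulltoeig} (which applies because $v$ lies in the kernel of $P^{\downarrow}_{n,i}$) gives
\[
\bigl\|P^{\uparrow}_{n,i,k+1} v\bigr\|_2^2 = (n-k-i)(k-i+1)\,\bigl\|P^{\uparrow}_{n,i,k} v\bigr\|_2^2.
\]
Iterating this identity from the base level $i$ up to level $k$ and substituting $\ell = j - i$ in the resulting product $\prod_{j=i}^{k-1}(n-j-i)(j-i+1)$ separates into $\prod_{\ell=0}^{k-i-1}(n-2i-\ell)\cdot\prod_{\ell=0}^{k-i-1}(\ell+1) = \frac{(n-2i)!}{(n-k-i)!}\,(k-i)!$, which is the claimed ratio.

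For the case $k > n-i$, note that when the recurrence reaches level $j = n-i$, the factor $(n-j-i) = 0$, so $\|P^{\uparrow}_{n,i,n-i+1} v\|_2^2 = 0$ and hence $P^{\uparrow}_{n,i,k} v = 0$ for all larger $k$ since each further application of $P^{\uparrow}$ preserves the zero vector. There is no substantial obstacle: the only subtlety is confirming that Claim~\ref{claim:nulltoeig} remains applicable at every intermediate level (which it does, since the claim is stated for any $k$ and the hypothesis $P^{\downarrow}_{n,i} v = 0$ is preserved throughout), and that the product telescopes cleanly into factorials under the index substitution.
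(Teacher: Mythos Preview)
Your proposal is correct and follows essentially the same approach as the paper: both proceed by induction on $k$ (equivalently $k-i$), use the adjoint relation $P^{\downarrow}_{n,k+1}=(P^{\uparrow}_{n,k})^*$ to rewrite $\|P^{\uparrow}_{n,i,k+1}v\|_2^2$ as $\langle P^{\uparrow}_{n,i,k}v,\,P^{\wedge}_{n,k}P^{\uparrow}_{n,i,k}v\rangle$, and then invoke Claim~\ref{claim:nulltoeig} to extract the scalar factor $(n-k-i)(k-i+1)$; the vanishing for $k>n-i$ is handled identically. The only cosmetic difference is that the paper applies the inductive hypothesis directly to obtain the closed form at each step, whereas you unroll the recursion into a product and then simplify.
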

\begin{proof}
We prove this using induction on $i$.
In the base case when $k = i$, the Claim follows as $P^{\uparrow}_{n, i, k}v = v$.

Now, assume that the statement is true for $P^{\uparrow}_{n, i, k-1}v$.
Then,
\begin{align}
\left\|P^{\uparrow}_{n, i, k}v\right\|_2^2 &= \left\langle P^{\uparrow}_{n, k-1}P^{\uparrow}_{n, i, k-1}v , P^{\uparrow}_{n, k-1}P^{\uparrow}_{n, i k-1}v \right\rangle \nonumber
\\
&= \left\langle P^{\uparrow}_{n, i, k-1}v , P^{\wedge}_{n, k-1}P^{\uparrow}_{n, i, k-1}v \right\rangle \nonumber
\\
&=
(n-k-i+1)(k-i) \left\langle P^{\uparrow}_{n, i, k-1}v , P^{\uparrow}_{n, i, k-1}v \right\rangle \label{eq:innerproduct2} \\
&=
(n-k-i+1)(k-i) \frac{(n-2i)!(k-i-1)!}{(n-k-i+1)!} \nonumber \\
& 
\frac{(n-2i)!(k-i)!}{(n-k-i)!} \label{eq:innerproduct}
\end{align}
as desired, where the third equality follows from Claim~\ref{claim:nulltoeig} and the fourth equality follows from the inductive hypothesis.

If $k = n-i+1$, then Eq.~\eqref{eq:innerproduct2} is $0$, and it follows that $\left\|P^{\uparrow}_{n, i, k}v\right\|_2 = 0$ whenever $k > n-i+1$ by the definition of $P^{\uparrow}_{n, i, k}$.
\end{proof}

Note that the assumption that $P^{\downarrow}_{n, i}$ has full-rank when $i \leq n/2$ combined with Claims~\ref{claim:nulltoeig} and~\ref{claim:normofup} yields a subspace of eigenvectors with equal eigenvalue of dimension $\binom{n}{i}-\binom{n}{i-1}$.
For a fixed $k$ and summing over all $i$ from $0$ to $\min\{k, n-k\}$, this is $\binom{n}{\min\{k, n-k\}}$ eigenvectors, and thus, this gives a complete basis of eigenvectors.

The following claim shows that to determine an orthogonal basis of eigenvectors of $P^{\vee}_{n, k}$ and $P^{\wedge}_{n, k}$ it is enough to determine an orthogonal basis for the nullspace of $P^{\downarrow}_{n, i}$ for all $i \leq \min\{k, n-k\}$.
That is, applying $P^{\uparrow}_{n, i'}$ to two orthogonal vectors preserves orthogonality.
Recall that the eigenvectors of a symmetric matrix with different eigenvalues are orthogonal, and thus it is sufficient to determine an orthogonal basis for the nullspace of $P^{\downarrow}_{n, i}$ for each $i$.

\begin{claim}\label{claim:ortho}
Let $v_1$ and $v_2$ be such that $\langle v_1, v_2 \rangle = 0$ and $P^{\downarrow}_{n, i} v_i= 0$ for all $i$.
Then \[\left\langle P^{\uparrow}_{n, i, k}v_1, P^{\uparrow}_{n, i, k}v_2 \right\rangle = 0\].
\end{claim}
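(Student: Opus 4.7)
The plan is to mirror the induction used in Claim~\ref{claim:normofup}, but now tracking a bilinear quantity instead of a norm. The induction will be on $k$, starting from $k = i$. In the base case $P^{\uparrow}_{n,i,i} = I_{\binom{n}{i}}$, so the desired inner product is just $\langle v_1, v_2 \rangle$, which is zero by hypothesis.

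For the inductive step, suppose $\langle P^{\uparrow}_{n,i,k-1} v_1, P^{\uparrow}_{n,i,k-1} v_2 \rangle = 0$. I would use the adjoint relation $P^{\downarrow}_{n,k} = (P^{\uparrow}_{n,k-1})^*$ to write
\[
\bigl\langle P^{\uparrow}_{n,i,k} v_1,\, P^{\uparrow}_{n,i,k} v_2 \bigr\rangle
= \bigl\langle P^{\uparrow}_{n,k-1} P^{\uparrow}_{n,i,k-1} v_1,\, P^{\uparrow}_{n,k-1} P^{\uparrow}_{n,i,k-1} v_2 \bigr\rangle
= \bigl\langle P^{\uparrow}_{n,i,k-1} v_1,\, P^{\wedge}_{n,k-1} P^{\uparrow}_{n,i,k-1} v_2 \bigr\rangle.
\]
Now the crucial input is Claim~\ref{claim:nulltoeig} applied to $v_2$: since $P^{\downarrow}_{n,i} v_2 = 0$, the vector $P^{\uparrow}_{n,i,k-1} v_2$ is an eigenvector of $P^{\wedge}_{n,k-1}$ with eigenvalue $(n-k-i+1)(k-i)$. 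Substituting this eigenvalue out pulls a scalar in front of $\langle P^{\uparrow}_{n,i,k-1} v_1, P^{\uparrow}_{n,i,k-1} v_2 \rangle$, which is zero by the inductive hypothesis, so the whole expression vanishes.

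This is essentially routine; the only subtle point is that we need the range of $k$ for which Claim~\ref{claim:nulltoeig} yields a nonzero eigenvalue (and hence for which the induction is genuinely informative rather than vacuous). When the prefactor $(n-k-i+1)(k-i)$ vanishes, both sides are zero anyway, consistent with Claim~\ref{claim:normofup} which shows $\|P^{\uparrow}_{n,i,k} v\|_2 = 0$ once $k > n-i$. So no obstacle arises there — the identity holds tautologically in the degenerate range. I would therefore expect the proof to be short and to essentially reuse the computation from Claim~\ref{claim:normofup} verbatim, with the single vector $v$ replaced by the pair $v_1, v_2$ and the orthogonality assumption $\langle v_1, v_2 \rangle = 0$ serving as the new base case.
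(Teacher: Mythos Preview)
Your proposal is correct and is essentially the paper's own proof: both argue by induction on $k$, use the adjoint relation to introduce $P^{\wedge}_{n,k-1}$, invoke Claim~\ref{claim:nulltoeig} to pull out the scalar $(n-k-i+1)(k-i)$, and finish with the inductive hypothesis. The paper even explicitly says it proceeds ``through a similar sequence of equalities'' as in Claim~\ref{claim:normofup}, exactly as you anticipated.
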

\begin{proof}
We prove this on induction on $k$.

When $k = i$, the statement is obvious.

Assume that $\left\langle P^{\uparrow}_{n, i, k-1}v_1, P^{\uparrow}_{n, i, k-1}v_2 \right\rangle = 0$.
Then through a similar sequence of equalities as Eq.~\eqref{eq:innerproduct}, one sees that
\begin{align*}
\left\langle P^{\uparrow}_{n, i, k}v_1, P^{\uparrow}_{n, i, k}v_2 \right\rangle
& =
(n-k-i+1)(k-i) \left\langle P^{\uparrow}_{n, i, k-1}  v_1,  P^{\uparrow}_{n, i, k-1} v_2 \right\rangle \\
&=
0
\end{align*}
as desired.
\end{proof}

We note that Claims~\ref{lem:diffuddu},~\ref{claim:nulltoeig}~\ref{claim:normofup}, and~\ref{claim:ortho} can be generalized to sequentially differential posets (of which, the collection of subsets of $[n]$ is an example), as is done in the work of Stanley~\cite{S88, S90}.



\subsection{An orthogonal basis for the nullspace of $P^{\downarrow}_{n, k}$}

In this section, we give an orthogonal basis for the nullspace of $P^{\downarrow}_{n, k}$ when $k \leq n/2$.
This basis was found by Srinivasan~\cite{S11} and independently by Filmus~\cite{F16}.
For completeness, we include the proof that this is an orthogonal basis due to Srinivasan.

Note that
\[
P^{\downarrow}_{n, k} = \begin{bmatrix}P^{\downarrow}_{n-1, k} & I_{\binom{n-1}{k}} \\
0 & P^{\downarrow}_{n-1, k-1}
\end{bmatrix}
\]
if $k \leq n/2$, where if $k = 1$, then we remove the bottom two blocks.
In particular, the fact that $P^{\downarrow}_{n, k}$ can be written recursively in terms of $P^{\downarrow}_{n-1, k}$ and $ P^{\downarrow}_{n-1, k-1}$ suggests that an orthogonal basis for the nullspace of the former matrix can be obtained from orthogonal bases for the nullspace of the latter matrices.

Let $S$ be a subset in $\mathcal{H}_{n, k}$.
Recall that this means that when the elements $S$ are listed in sorted order as $\{s_1, s_2, \ldots, s_{|S|}\}$, that $s_i \geq 2i$ for all $i$.
We associate with each $S$ a vector $\chi_{n, S}$ such that $P^{\downarrow}_{n, |S|}\chi_{n, S} = 0$.
We define $\chi_{n, S}$ recursively as follows.
\begin{equation}
\chi_{n, S} = \begin{cases} [1] & \text{if } |S| = 0 \\
\begin{bmatrix} \chi_{n-1, S}\\ 0\end{bmatrix} & \text{if } n \not\in S \\
\begin{bmatrix} -\displaystyle\frac{P^{\uparrow}_{n-1, |S|-1}\chi_{n-1, S\backslash\{n\}}}{n-2|S|+1}\\ \chi_{n-1, S\backslash\{n\}}\end{bmatrix} & \text{otherwise}
\end{cases} \label{eq:chidef}
\end{equation}
We briefly give some intuition behind restricting $S$ to only those subsets in $\mathcal{H}_{n, k}$.
Consider a set $S$ of size at most $k$ that is not in $\mathcal{H}_{n, k}$.
Let $S = \{s_1, \ldots, s_{|S|}\}$ be in sorted order, and let $i$ be the smallest integer such that $s_i < 2i$.
Then either $i = 1$ and $s_i = 1$, or $s_{i-1} \geq 2i-2$ and $s_i = 2i-1$.
Thus, $s_i-2i+1 = 0$, and $\chi_{n, S}$ is not well-defined.

By Bertrand's ballot problem, there are $\binom{n}{|S|}-\binom{n}{|S|-1}$ sets in $\mathcal{H}_{n, k}$ of size $|S|$.
Thus, proving that $\chi_{n, S}$ is in the nullspace of $P^{\downarrow}_{n, |S|}$ and that if $|S_1| = |S_2|$ and $S_1 \neq S_2$, then $\left\langle\chi_{n, S_1}, \chi_{n, S_2}\right\rangle = 0$ are both enough to obtain an orthogonal basis for the nullspace of $P^{\downarrow}_{n, k}$ and also show it has full rank.

We first show that $\chi_{n, S}$ is in fact in the nullspace of $P^{\downarrow}_{n, |S|}$.

\begin{claim}\label{claim:characnull}
For all $S \in \mathcal{H}_{n, k}$,
$P^{\downarrow}_{n, |S|}\chi_{n, S} = 0$.
\end{claim}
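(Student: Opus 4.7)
The plan is to induct on $n$, exploiting the recursive definition of $\chi_{n, S}$ together with the block structure of $P^{\downarrow}_{n, |S|}$ recorded in the excerpt. The base case $n = 0$ (or $|S| = 0$) is immediate since $\chi_{n, \emptyset} = [1]$ and the relevant map is trivial. For the inductive step, I would split on whether $n \in S$ and in each case write $P^{\downarrow}_{n, |S|} \chi_{n, S}$ using the block formula
\[
P^{\downarrow}_{n, k} = \begin{bmatrix} P^{\downarrow}_{n-1, k} & I_{\binom{n-1}{k}} \\ 0 & P^{\downarrow}_{n-1, k-1}\end{bmatrix}.
\]

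If $n \notin S$, then $S \subseteq [n-1]$ and $S \in \mathcal{H}_{n-1, k}$, so block multiplication immediately gives
\[
P^{\downarrow}_{n, |S|}\chi_{n, S} = \begin{bmatrix} P^{\downarrow}_{n-1, |S|}\chi_{n-1, S} \\ 0\end{bmatrix},
\]
and the inductive hypothesis finishes the case. If $n \in S$, set $T = S \setminus \{n\}$, so $T \in \mathcal{H}_{n-1, k-1}$. Block multiplication now yields two entries. The bottom entry is $P^{\downarrow}_{n-1, |S|-1}\chi_{n-1, T}$, which vanishes by the inductive hypothesis applied to $T$. The top entry reads
\[
-\frac{P^{\downarrow}_{n-1, |S|} P^{\uparrow}_{n-1, |S|-1}\chi_{n-1, T}}{n - 2|S| + 1} + \chi_{n-1, T},
\]
and the real work is to show this collapses to $0$.

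To handle the top entry, I would recognize $P^{\downarrow}_{n-1, |S|} P^{\uparrow}_{n-1, |S|-1}$ as $P^{\wedge}_{n-1, |S|-1}$ by definition, then apply Claim~\ref{lem:diffuddu} to rewrite it as $P^{\vee}_{n-1, |S|-1} + (n - 2|S| + 1) I$. Since $P^{\vee}_{n-1, |S|-1} = P^{\uparrow}_{n-1, |S|-2} P^{\downarrow}_{n-1, |S|-1}$ and the inductive hypothesis gives $P^{\downarrow}_{n-1, |S|-1}\chi_{n-1, T} = 0$, the $P^{\vee}$ term annihilates $\chi_{n-1, T}$, leaving $P^{\wedge}_{n-1, |S|-1}\chi_{n-1, T} = (n - 2|S| + 1)\chi_{n-1, T}$. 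Substituting back, the scalar cancels the denominator and the two terms cancel, giving $0$ as desired.

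The one subtlety worth flagging is the well-definedness of the denominator $n - 2|S| + 1$, which is exactly where the hypothesis $S \in \mathcal{H}_{n, k}$ is used: since $n \in S$ and $n$ is the largest element, $n = s_{|S|} \geq 2|S|$, so $n - 2|S| + 1 \geq 1$. This is the same observation made earlier in the excerpt to motivate restricting attention to $\mathcal{H}_{n, k}$, and it is essential for the cancellation above. The main obstacle is really just lining up Claim~\ref{lem:diffuddu} with the inductive hypothesis in the correct order — once $P^{\wedge}$ is replaced by $P^{\vee} + (n - 2|S|+1)I$, everything falls out mechanically.
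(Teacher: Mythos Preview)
Your proposal is correct and follows essentially the same approach as the paper's proof: induction on $n$, splitting on whether $n \in S$, and in the nontrivial case using the block decomposition of $P^{\downarrow}_{n,|S|}$ together with Claim~\ref{lem:diffuddu} to rewrite $P^{\wedge}_{n-1,|S|-1}$ as $P^{\vee}_{n-1,|S|-1} + (n-2|S|+1)I$ so that the inductive hypothesis annihilates the $P^{\vee}$ term. The only cosmetic difference is that the paper takes $|S|=1$ as its base case while you take $|S|=0$, which is equally valid.
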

\begin{proof}
We prove this using induction on $n$ and $|S|$.

If $S = \{n'\}$, then 
\[
\chi_{n, S} = \begin{bmatrix}\smash[b]{\underbrace{\begin{matrix}-\frac{1}{n'-1} & -\frac{1}{n'-1} & \cdots & -\frac{1}{n'-1}\end{matrix}}_{n'-1 \text{ times}}} & 1 & 0 & 0 & \cdots \end{bmatrix}^*.
\]
Thus, $P^{\downarrow}_{n, 1}\chi_{n, S} = 0$.

If $n \not\in S$, then $P^{\downarrow}_{n, |S|}\chi_{n, S} = P^{\downarrow}_{n-1, |S|}\chi_{n, S} = 0$ by the inductive hypothesis.

If $n \in S$, then 
\begin{align*}
P^{\downarrow}_{n, |S|}\chi_{n, S} &= \begin{bmatrix}- \displaystyle\frac{P^{\wedge}_{n-1, |S|-1}\chi_{n-1, S\backslash\{n\}}}{n-2|S|+1}+\chi_{n-1, S\backslash\{n\}}\\ P^{\downarrow}_{n-1, |S|-1}\chi_{n-1, S\backslash\{n\}}\end{bmatrix} \\
&= \begin{bmatrix} -\displaystyle\frac{P^{\vee}_{n-1, |S|-1}\chi_{n-1, S\backslash\{n\}}}{n-2|S|+1}-\chi_{n-1, S\backslash\{n\}}+\chi_{n-1, S\backslash\{n\}}\\ 0\end{bmatrix}
\\
& = 0.
\end{align*}
where the second inequality follows from the relationship between $P^{\wedge}_{n-1, |S|-1}$ and $P^{\vee}_{n-1, |S|-1}$ and the inductive hypothesis, and the third inequality also follows from the inductive hypothesis.
\end{proof}

The following claim shows that if $|S_1| = |S_2|$ and $S_1 \neq S_2$, then $\chi_{n, S_1}$ and $\chi_{n, S_2}$ are orthogonal.
Recall that eigenvectors of a symmetric matrix with different eigenvalues are orthogonal.
Thus, this claim along with Claim~\ref{claim:characortho} are enough to prove that the given basis is orthogonal.

\begin{claim}\label{claim:characortho}
For $S_1, S_2 \in \mathcal{H}_{n, k}$, if $|S_1| = |S_2|$ and $S_1 \neq S_2$, then $\left\langle\chi_{n, S_1}, \chi_{n, S_2}\right\rangle = 0$.
\end{claim}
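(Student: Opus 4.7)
The plan is to prove this by induction on $n$, mirroring the recursive definition of $\chi_{n,S}$ in Eq.~\eqref{eq:chidef}. The base case (small $n$) is either vacuous or immediate, since any two distinct sets of the same size in $\mathcal{H}_{n,k}$ must have $n$ large enough for the recursion to kick in. For the inductive step, given distinct $S_1, S_2 \in \mathcal{H}_{n,k}$ with $|S_1|=|S_2|$, I would split into three cases according to whether $n$ appears in $S_1$ and $S_2$.

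\emph{Case 1: $n \notin S_1$ and $n \notin S_2$.} Both vectors have the block form $\begin{bmatrix}\chi_{n-1,S_i}\\ 0\end{bmatrix}$, so $\langle \chi_{n,S_1},\chi_{n,S_2}\rangle=\langle\chi_{n-1,S_1},\chi_{n-1,S_2}\rangle$, which vanishes by the inductive hypothesis.

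\emph{Case 2: $n\in S_1$ but $n\notin S_2$ (or symmetrically).} Only the top blocks interact, giving
\[
\langle \chi_{n,S_1},\chi_{n,S_2}\rangle \;=\; -\tfrac{1}{n-2|S_1|+1}\left\langle P^{\uparrow}_{n-1,|S_1|-1}\chi_{n-1,S_1\setminus\{n\}},\,\chi_{n-1,S_2}\right\rangle.
\]
Using $P^{\downarrow}_{n-1,|S_1|} = (P^{\uparrow}_{n-1,|S_1|-1})^{*}$, this equals $-\tfrac{1}{n-2|S_1|+1}\langle \chi_{n-1,S_1\setminus\{n\}},\,P^{\downarrow}_{n-1,|S_1|}\chi_{n-1,S_2}\rangle$, which is $0$ by Claim~\ref{claim:characnull} applied to $S_2$ (noting $|S_2|=|S_1|$). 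The denominator is nonzero since $|S_1|\le n/2$ for $S_1\in\mathcal{H}_{n,k}$.

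\emph{Case 3: $n\in S_1$ and $n\in S_2$.} Both block contributions appear:
\[
\langle \chi_{n,S_1},\chi_{n,S_2}\rangle \;=\; \tfrac{1}{(n-2|S_1|+1)^{2}}\left\langle P^{\uparrow}_{n-1,|S_1|-1}\chi_{n-1,S_1\setminus\{n\}},\,P^{\uparrow}_{n-1,|S_1|-1}\chi_{n-1,S_2\setminus\{n\}}\right\rangle + \left\langle\chi_{n-1,S_1\setminus\{n\}},\chi_{n-1,S_2\setminus\{n\}}\right\rangle.
\]
Since $S_1\neq S_2$ and both contain $n$, the reduced sets $S_1\setminus\{n\}$ and $S_2\setminus\{n\}$ are distinct elements of $\mathcal{H}_{n-1,k}$ of equal size, so the second term vanishes by induction. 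Combined with Claim~\ref{claim:characnull}, the hypotheses of Claim~\ref{claim:ortho} are satisfied, so the first term vanishes as well.

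The main obstacle I anticipate is bookkeeping in Case~2: one must carefully check that the dimensions of the top blocks line up (both lie in $\R^{\binom{n-1}{|S_1|}}$), that the scalar $n-2|S_1|+1$ is nonzero in the range of interest, and that one can freely invoke Claim~\ref{claim:characnull} on $\chi_{n-1,S_2}$ even though $|S_2|=|S_1|$ rather than the seemingly natural $|S_2|-1$. Once these are checked, the argument is a straightforward block computation.
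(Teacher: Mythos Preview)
Your proposal is correct and follows essentially the same inductive scheme as the paper: induction on $n$ with the same three-way case split on membership of $n$ in $S_1,S_2$, using Claim~\ref{claim:characnull} (via the adjoint) in the mixed case and the inductive hypothesis in the other two. The only cosmetic difference is that in the ``both contain $n$'' case you invoke Claim~\ref{claim:ortho} to kill the $P^{\uparrow}$-term, whereas the paper unwinds that same computation inline via Claim~\ref{lem:diffuddu}; and your base case is (correctly) declared vacuous rather than worked out explicitly for $|S_i|=1$.
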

\begin{proof}
We prove this using induction on $n$.

Consider the base case of $|S_1| = |S_2| = 1$.
In particular, let $S_1 = \{n_1\}$ and let $S_2 = \{n_2\}$, and without loss of generality, assume that $n_2 > n_1$.
Then
\[
\chi_n(S_i) = \begin{bmatrix}\smash[b]{\underbrace{\begin{matrix}-\frac{1}{n_i-1} & -\frac{1}{n_i-1} & \cdots & -\frac{1}{n_i-1}\end{matrix}}_{n_i-1 \text{ times}}} & 1 & 0 & 0 & \cdots \end{bmatrix}^*.
\]
and the claim clearly holds.

For the inductive step, assume that the statement is true for all pairs $\chi_{n'}(S_1')$ and $\chi_{n'}(S_2')$ where $n' < n$ and $|S_1'| = |S_2'|$.
Then there are three cases.

\noindent\textbf{Case 1:} If neither $S_1$ nor $S_2$ contain $n$, then $\left\langle\chi_{n, S_1}, \chi_{n, S_2}\right\rangle = 0$ by the inductive hypothesis.

\noindent\textbf{Case 2:} If both $S_1$ and $S_2$ contain $n$, then 
\begin{alignat*}{2}\left\langle\chi_{n, S_1}, \chi_{n, S_2}\right\rangle &= \frac{1}{(n-2|S_2|+1)^2}&&\left\langle \chi_{n-1, S_1\backslash\{n\}}, P^{\wedge}_{n-1, |S_2|-1}\chi_{n-1, S_2\backslash\{n\}}\right\rangle
\\& &&+\left\langle\chi_{n-1, S_1\backslash\{n\}}, \chi_{n-1, S_2\backslash\{n\}}\right\rangle
\\
&= 
\frac{1}{(n-2|S_2|+1)^2}&&\left\langle\chi_{n-1, S_1\backslash\{n\}}, P^{\vee}_{n-1, |S_2|-1}\chi_{n-1, S_2\backslash\{n\}}\right\rangle
\\& &&+\frac{n-2|S_2|+2}{(n-2|S_2|+1)}\left\langle\chi_{n-1, S_1\backslash\{n\}}, \chi_{n-1, S_2\backslash\{n\}}\right\rangle
\\
&=
0
\end{alignat*}
where the second equality follows from Fact~\ref{lem:diffuddu} and the third inequality follows from the inductive hypothesis for $S_1\backslash\{n\}$ and $S_2\backslash\{n\}$, and the fact that $P^{\downarrow}_{n-1, |S_2|-1}  \chi_{n-1}(S_2\backslash\{n\}) = 0$.

\noindent\textbf{Case 3:} If only $S_2$ contains $n$, then 
\begin{align*}
\left\langle\chi_{n, S_1}, \chi_{n, S_2}\right\rangle
&=
\left\langle \chi_{n-1}(S_1), \frac{P^{\uparrow}_{n-1, |S_2|-1}\chi_{n-1, S_2\backslash\{n\}}}{n-2|S_2|+1}\right\rangle \\
&=
\left\langle P^{\downarrow}_{n-1, k}\chi_{n-1}(S_1), \frac{\chi_{n-1, S_2\backslash\{n\}}}{n-2|S_2|+1}\right\rangle \\
&=
0
\end{align*}
where the last inequality follows from the fact that $P^{\downarrow}_{n-1, |S_2|}\chi_{n-1, S_1} = 0$.
\end{proof}

We summarize below.

\begin{theorem}\label{thm:orthobasis}
The set of vectors 
\[
\left\{P^{\uparrow}_{n, |S|} \chi_{n, S} \mid S \in \mathcal{H}_{n, \min\{k, n-k\}}\right\}
\]
form an orthogonal basis of eigenvectors for $A_{n, k}$, the adjacency matrix of the Johnson graph.
\end{theorem}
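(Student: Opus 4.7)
The plan is to assemble the statement by combining all the claims built up in this section in a single counting-plus-orthogonality argument. Fix $i$ with $0 \le i \le \min\{k, n-k\}$, and consider the family $\{\chi_{n,S} \st S \in \mathcal{H}_{n,\min\{k,n-k\}},\ |S|=i\}$. By Claim~\ref{claim:characnull} these vectors lie in the nullspace of $P^{\downarrow}_{n,i}$; by Claim~\ref{claim:characortho} they are pairwise orthogonal and hence linearly independent; and by Bertrand's ballot problem there are exactly $\binom{n}{i}-\binom{n}{i-1}$ of them. Next, apply $P^{\uparrow}_{n,i,k}$: by Claim~\ref{claim:normofup}, and the fact that $i\le n-k$ guarantees $k \le n-i$, each image is nonzero; by Claim~\ref{claim:ortho} the images remain pairwise orthogonal within the level $i$; and by Claim~\ref{claim:nulltoeig} each image is an eigenvector of $P^{\wedge}_{n,k}$ with eigenvalue $(n-k-i)(k-i+1)$, a quantity depending only on $i$.

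The remaining orthogonality to establish is across different values of $i$. For this I would first check that the eigenvalues $\lambda_i = (n-k-i)(k-i+1)$ are distinct as $i$ varies over $\{0, 1, \ldots, \min\{k, n-k\}\}$: treating $i$ as a real variable and differentiating gives a derivative that is strictly negative throughout $[0, n/2]$, so $\lambda_i$ is strictly decreasing on the relevant range. Because $P^{\wedge}_{n,k}$ is symmetric, eigenvectors with distinct eigenvalues are automatically orthogonal, so the union across $i$ of the orthogonal systems above is itself an orthogonal system of eigenvectors of $P^{\wedge}_{n,k}$.

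To see we have everything, sum the sizes:
\[
\sum_{i=0}^{\min\{k,n-k\}} \left( \binom{n}{i} - \binom{n}{i-1} \right) \;=\; \binom{n}{\min\{k,n-k\}} \;=\; \binom{n}{k},
\]
which matches $\dim \R^{\binom{[n]}{k}}$. Thus the collection is a full orthogonal basis of eigenvectors of $P^{\wedge}_{n,k}$ (equivalently of $P^{\vee}_{n,k}$, by Claim~\ref{lem:diffuddu}). Since $A_{n,k}$ was noted to differ from $P^{\vee}_{n,k}$ and $P^{\wedge}_{n,k}$ only by a multiple of the identity, the same collection is an orthogonal eigenbasis for $A_{n,k}$.

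The main subtlety—rather than a genuine obstacle—is that a priori we have only produced, for each $i$, an orthogonal set inside the nullspace of $P^{\downarrow}_{n,i}$ of size $\binom{n}{i}-\binom{n}{i-1}$, without yet knowing that $P^{\downarrow}_{n,i}$ has full row rank. It is precisely the dimension count above that retroactively certifies this rank property and, simultaneously, that no eigenvector has been missed. Writing the argument in this order (construct the orthogonal family, then invoke the count) lets one avoid a separate rank computation.
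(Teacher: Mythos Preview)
Your proposal is correct and follows essentially the same approach as the paper: the paper's proof is a one-line appeal to Claims~\ref{lem:diffuddu}, \ref{claim:nulltoeig}, \ref{claim:normofup}, \ref{claim:ortho}, \ref{claim:characnull}, \ref{claim:characortho}, the identity $|\mathcal{H}_{n,\min\{k,n-k\}}| = \binom{n}{k}$, and the orthogonality of eigenvectors with distinct eigenvalues, which is exactly the chain of reasoning you have unpacked. Your explicit verification that the eigenvalues $\lambda_i = (n-k-i)(k-i+1)$ are strictly decreasing on $\{0,\ldots,\min\{k,n-k\}\}$ and your remark that the dimension count retroactively certifies the full row rank of $P^{\downarrow}_{n,i}$ are both welcome clarifications that the paper leaves implicit.
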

\begin{proof}
The theorem follows by applying Claims~\ref{claim:nulltoeig},~\ref{claim:normofup},~\ref{claim:ortho},~\ref{claim:characnull}, and~\ref{claim:characortho}, and noticing that $|\mathcal{H}_{n, \min\{k, n-k\}}| = \binom{n}{k}$ and that eigenvectors with distinct eigenvalues are orthogonal.
\end{proof}

\subsection{A computation of the norm}\label{sec:norm}

In this section, we compute the norm of $P^{\uparrow}_{n, |S|, k}\chi_{n,S}$.
This combined with Theorem~\ref{thm:orthobasis} will yield a set of orthonormal eigenvectors that can be used to construct a Fourier transform over the slice.
The results in this section can be found in~\cite{F16}.
We give an inductive proof that follows the framework of Srinivasan~\cite{S11} and Filmus and Lindzey~\cite{FL20}.

We start by computing the norm of $\chi_{n,S}$.

\begin{claim}\label{claim:normofchi}
Let $S = \{s_1, \ldots, s_{|S|}\}$ be such that $s_1 \leq s_2 \leq \cdots \leq s_{|S|}$.
Then,
\[
\left\|\chi_{n, S}\right\|_2^2 = \prod_{i=1}^{|S|} \frac{s_i-2i+2}{s_i-2i+1}
\]
\end{claim}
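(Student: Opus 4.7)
The plan is to prove this by induction on $n$, following the recursive definition of $\chi_{n, S}$ in Eq.~\eqref{eq:chidef} and leveraging Claim~\ref{claim:normofup} to evaluate the norm of the upward-pushed piece that appears in the recursion. The base case is $|S|=0$, for which $\chi_{n,S}=[1]$ has norm $1$ and the empty product is $1$. The case $n \notin S$ is immediate: by the recursive definition, $\chi_{n,S}$ is just $\chi_{n-1,S}$ padded with zeros, so the norm is unchanged and the product on the right does not depend on the ambient $n$.

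The interesting case is $n \in S$. Here the recursion gives
\[
\left\|\chi_{n,S}\right\|_2^2 = \frac{\bigl\|P^{\uparrow}_{n-1,|S|-1}\chi_{n-1, S\setminus\{n\}}\bigr\|_2^2}{(n-2|S|+1)^2} + \bigl\|\chi_{n-1, S\setminus\{n\}}\bigr\|_2^2.
\]
Since $S\setminus\{n\} \in \mathcal{H}_{n-1,\,|S|-1}$ and Claim~\ref{claim:characnull} gives $P^{\downarrow}_{n-1,|S|-1}\chi_{n-1,S\setminus\{n\}} = 0$, I can apply Claim~\ref{claim:normofup} with parameters $(n,i,k) \mapsto (n-1,\,|S|-1,\,|S|)$. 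A direct substitution gives
\[
\frac{\bigl\|P^{\uparrow}_{n-1,|S|-1}\chi_{n-1,S\setminus\{n\}}\bigr\|_2^2}{\bigl\|\chi_{n-1,S\setminus\{n\}}\bigr\|_2^2} \;=\; \frac{(n-2|S|+1)!\,\cdot 1!}{(n-2|S|)!} \;=\; n-2|S|+1.
\]
Substituting back and simplifying yields
\[
\left\|\chi_{n,S}\right\|_2^2 = \left(\frac{1}{n-2|S|+1} + 1\right)\bigl\|\chi_{n-1,S\setminus\{n\}}\bigr\|_2^2 = \frac{n-2|S|+2}{n-2|S|+1}\bigl\|\chi_{n-1,S\setminus\{n\}}\bigr\|_2^2.
\]
Since $n \in S$ and the elements of $S$ are sorted, $n = s_{|S|}$, so the scalar factor is exactly $\frac{s_{|S|}-2|S|+2}{s_{|S|}-2|S|+1}$. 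Combining with the inductive hypothesis applied to $S\setminus\{n\}$ produces the desired product.

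The main technical points to be careful about are (i) verifying that Claim~\ref{claim:normofup} applies, which requires both $S\setminus\{n\} \in \mathcal{H}_{n-1,|S|-1}$ (true because removing the largest element preserves the ballot condition) and $|S|-1 \leq (n-1)/2$ (true because $s_{|S|}=n\geq 2|S|$); (ii) correctly specializing the factorial expression in Claim~\ref{claim:normofup} with $k=i+1$ so that most factors cancel; and (iii) matching $n$ to $s_{|S|}$ in this case so that the recursive step produces the final factor in the claimed product. Beyond these bookkeeping checks, no new ideas are required.
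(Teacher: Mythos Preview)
Your proof is correct and follows essentially the same inductive strategy as the paper. The only cosmetic differences are that you take $|S|=0$ as the base case (the paper starts at $|S|=1$) and you invoke Claim~\ref{claim:normofup} to evaluate $\|P^{\uparrow}_{n-1,|S|-1}\chi_{n-1,S\setminus\{n\}}\|_2^2$, whereas the paper computes this directly as $\langle \chi_{n-1,S\setminus\{n\}},\, P^{\wedge}_{n-1,|S|-1}\chi_{n-1,S\setminus\{n\}}\rangle$ using the eigenvalue from Claim~\ref{claim:nulltoeig}; since Claim~\ref{claim:normofup} is itself proved by exactly that computation, the two routes are the same argument in different packaging.
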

\begin{proof}
We prove this using induction on $n$.  
For the base case, if $S = \{n'\}$, then 
\[
\chi_{n, S} = \begin{bmatrix}\smash[b]{\underbrace{\begin{matrix}-\frac{1}{n'-1} & -\frac{1}{n'-1} & \cdots & -\frac{1}{n'-1}\end{matrix}}_{n'-1 \text{ times}}} & 1 & 0 & 0 & \cdots \end{bmatrix}^*.
\]
and thus, 
\[
\left\|\chi_{n, S}\right\|_2^2  = 1+\frac{1}{n'-1} = \frac{n'}{n'-1}
\]
 as desired.

If $n \not\in S$, then 
\[
\left\|\chi_{n, S}\right\|_2^2 = \left\|\chi_{n-1, S}\right\|_2^2 = \prod_{i=1}^{|S|} \frac{s_i-2i+2}{s_i-2i+1}
\]
where the second inequality follows from the inductive hypothesis.

If $n \in S$, then 
\begin{align*}
\left\|\chi_{n, S}\right\|_2^2 &= \left\|\chi_{n-1, S\backslash\{n\}}\right\|_2^2+\frac{1}{(n-2|S|+1)^2}\left\|P^{\uparrow}_{n-1, |S|-1}\chi_{n-1, S\backslash\{n\}}\right\|_2^2 \\
&=\left\|\chi_{n-1, S\backslash\{n\}}\right\|_2^2+\frac{1}{(n-2|S|+1)^2}\left\langle \chi_{n-1, S\backslash\{n\}}, P^{\wedge}_{n, |S|-1}\chi_{n-1, S\backslash\{n\}}\right\rangle \\
&= \left(1+\frac{1}{n-2|S|+1}\right)\left\|\chi_{n-1, S\backslash\{n\}}\right\|_2^2 \\
&= \frac{n-2|S|+2}{n-2|S|+1}\prod_{i=1}^{k-1} \frac{s_i-2i+2}{s_i-2i+1} \\
&= \prod_{i=1}^{|S|} \frac{s_i-2i+2}{s_i-2i+1}
\end{align*}
where the third equality follows from the fact that $\chi_{n-1, S\backslash\{n\}}$ is an eigenvector of $P^{\wedge}_{n, k-1}$, and the fourth equality follows from the inductive hypothesis.
\end{proof}

Finally, we can compute $\left\|P^{\uparrow}_{n, |S|, k}\chi_{n,S}\right\|_2^2$.

\begin{claim}\label{claim:norm}
\[
\left\|P^{\uparrow}_{n, |S|, k}\chi_{n,S}\right\|_2^2 = \frac{(n-2|S|)!(k-|S|)!}{(n-k-|S|)!}\prod_{i=1}^k \frac{s_i-2i+2}{s_i-2i+1}
\]
\end{claim}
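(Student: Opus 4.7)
The claim is essentially a one-line composition of the two preceding ingredients, so the plan is to combine them and nothing more. Specifically, I will apply Claim~\ref{claim:normofup} to $v = \chi_{n,S}$ with $i = |S|$ to pass from the norm of $\chi_{n,S}$ to the norm of $P^{\uparrow}_{n,|S|,k}\chi_{n,S}$, and then substitute the value of $\|\chi_{n,S}\|_2^2$ given by Claim~\ref{claim:normofchi}.

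To check that Claim~\ref{claim:normofup} applies, I need two facts. First, I need $P^{\downarrow}_{n,|S|}\chi_{n,S} = 0$, which is exactly the content of Claim~\ref{claim:characnull}. Second, I need the hypothesis $|S| \leq n/2$; this is automatic since we only ever consider $S \in \mathcal{H}_{n,\min\{k,n-k\}}$, so in particular $|S| \leq \min\{k,n-k\} \leq n/2$. Under these conditions, Claim~\ref{claim:normofup} gives
\[
\frac{\left\|P^{\uparrow}_{n,|S|,k}\chi_{n,S}\right\|_2^2}{\left\|\chi_{n,S}\right\|_2^2} = \frac{(n-2|S|)!\,(k-|S|)!}{(n-k-|S|)!}
\]
provided $k \le n-|S|$ (and the left side is zero otherwise, matching the convention $\widehat f(S)=0$ used elsewhere).

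Multiplying through by $\|\chi_{n,S}\|_2^2 = \prod_{i=1}^{|S|}(s_i-2i+2)/(s_i-2i+1)$ from Claim~\ref{claim:normofchi} yields the stated formula directly; no induction or further computation is required. There is no genuine obstacle here — the only thing to flag is the index range in the product (the statement writes $\prod_{i=1}^k$ but since $s_i$ is only defined for $i \leq |S|$, this should be read as $\prod_{i=1}^{|S|}$, consistent with Claim~\ref{claim:normofchi}).
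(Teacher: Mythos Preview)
Your proposal is correct and matches the paper's own proof exactly: the paper simply says ``The claim follows by applying Claims~\ref{claim:normofup} and~\ref{claim:normofchi}.'' Your additional observation that the product should run to $|S|$ rather than $k$ is also accurate.
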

\begin{proof}
The claim follows by applying Claims~\ref{claim:normofup} and~\ref{claim:normofchi}.
\end{proof}

Finally, we give a formula definition for the Fourier coefficients of a function on the slice.
\begin{definition}
For $f \in \R^{\binom{[n]}{k}}$ and $S \in \mathcal{H}_{n, \min\{k, n-k\}}$, define
\[
\widehat{f}(S) = \frac{1}{\left\|P^{\uparrow}_{n, |S|, k}\chi_{n,S}\right\|_2}\left\langle f, P^{\uparrow}_{n, |S|, k}\chi_{n, S}\right\rangle.
\]
Here, $P^{\uparrow}$ is defined in Eq.~\eqref{eq:updef} and $\chi_{n, S}$ is defined in Eq.~\eqref{eq:chidef}.
It will be convenient to define $\widehat{f}(S)$ even when $S$ is not in $\mathcal{H}_{n, \min\{k, n-k\}}$, in which case we let $\widehat{f}(S) = 0$.
\end{definition}

\section{Restrictions of functions on the slice}\label{sec:restrict}

In this section, we state and prove our main theorem regarding a relationship between the Fourier coefficients of a function on the slice, and the Fourier coefficients of its restrictions.

\begin{definition}[Restriction]
Let $f: \binom{[n]}{k} \rightarrow \R$.
For $m < n$, and $z \in \{0, 1\}^m$, let $f_{z}: {\binom{[n-m]}{k-|z|}} \rightarrow \R$ (where $|z| = \sum_{i=1}^m z_i$ is the Hamming weight of $z$) be the function defined by $f_z(x) = f(x \circ z)$ where $\circ$ denotes concatenation.
\end{definition}

We start with the following theorem.

\begin{theorem}\label{thm:restrict}
Let $f: \binom{[n]}{k} \rightarrow \R$.
Then, for all $S \in \mathcal{H}_{n-1, \min\{k, n-k\}}$,
\[
\widehat{f}(S) = \frac{1}{\sqrt{n-2|S|}}\left(\sqrt{n-k-|S|}\widehat{f_0}(S)+\sqrt{k-|S|}\widehat{f_1}(S)\right).
\]
Additionally, if $|S| < \min\{k, n-k\}$, then
\[
\widehat{f}(S\cup \{n\}) = \frac{1}{\sqrt{n-2|S|}}\left(-\sqrt{k-|S|}\widehat{f_0}(S)+\sqrt{n-k-|S|}\widehat{f_1}(S)\right)
\]

Thus, if $|S|< \min\{k, n-k\}$, then
\[
\widehat{f_0}(S) = \frac{1}{\sqrt{n-2|S|}}\left(\sqrt{n-k-|S|}\widehat{f}(S)-\sqrt{k-|S|}\widehat{f}(S\cup \{n\})\right)
\]
and
\[
\widehat{f_1}(S) =  \frac{1}{\sqrt{n-2|S|}}\left(\sqrt{n-k-|S|}\widehat{f}(S\cup \{n\})+\sqrt{k-|S|}\widehat{f}(S)\right)
\]
\end{theorem}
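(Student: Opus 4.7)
The plan is to decompose each of the vectors $P^{\uparrow}_{n, |S|, k}\chi_{n, S}$ and $P^{\uparrow}_{n, |S|+1, k}\chi_{n, S\cup\{n\}}$ along the natural splitting $\binom{[n]}{k} = \binom{[n-1]}{k} \sqcup \binom{[n-1]}{k-1}$ given by whether the $n$-th coordinate is $0$ or $1$. Under this splitting the function $f$ becomes $\begin{bmatrix}f_0\\ f_1\end{bmatrix}$, so once the two ``Fourier vectors'' above are split in the same way, the inner products defining $\widehat{f}(S)$ and $\widehat{f}(S\cup\{n\})$ become linear combinations of those defining $\widehat{f_0}(S)$ and $\widehat{f_1}(S)$. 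Identifying the coefficients gives the first two identities, and inverting the resulting $2 \times 2$ linear map yields the last two.

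The first step is the block form
\[
P^{\uparrow}_{n, j} = \begin{bmatrix} P^{\uparrow}_{n-1, j} & 0 \\ I_{\binom{n-1}{j}} & P^{\uparrow}_{n-1, j-1}\end{bmatrix},
\]
which is immediate from Eq.~\eqref{eq:updef}. Iterating this and using the $n\notin S$ case of Eq.~\eqref{eq:chidef}, I would prove by induction on $k$ that
\[
P^{\uparrow}_{n, |S|, k}\chi_{n, S} = \begin{bmatrix} P^{\uparrow}_{n-1, |S|, k}\chi_{n-1, S} \\ (k-|S|)\, P^{\uparrow}_{n-1, |S|, k-1}\chi_{n-1, S}\end{bmatrix}.
\]
A parallel induction, started from the $n\in S$ case of Eq.~\eqref{eq:chidef}, yields
\[
P^{\uparrow}_{n, |S|+1, k}\chi_{n, S\cup\{n\}} = \begin{bmatrix} -\frac{1}{n-2|S|-1}\, P^{\uparrow}_{n-1, |S|, k}\chi_{n-1, S} \\ \frac{n-k-|S|}{n-2|S|-1}\, P^{\uparrow}_{n-1, |S|, k-1}\chi_{n-1, S}\end{bmatrix},
\]
the bottom entry being a collapse of the algebraic identity $1 - \frac{k-|S|-1}{n-2|S|-1} = \frac{n-k-|S|}{n-2|S|-1}$.

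Expanding each inner product over the two blocks and dividing by the appropriate norm produces the two forward identities, provided one can evaluate the ratios
\[
\frac{\|P^{\uparrow}_{n-1, |S|, k}\chi_{n-1, S}\|_2}{\|P^{\uparrow}_{n, |S|, k}\chi_{n, S}\|_2} = \sqrt{\tfrac{n-k-|S|}{n-2|S|}}, \qquad \frac{\|P^{\uparrow}_{n-1, |S|, k-1}\chi_{n-1, S}\|_2}{\|P^{\uparrow}_{n, |S|, k}\chi_{n, S}\|_2} = \sqrt{\tfrac{1}{(n-2|S|)(k-|S|)}},
\]
together with the analogous ratios for the normalization of $\chi_{n, S\cup\{n\}}$; all of these come directly from Claim~\ref{claim:norm}, with the $\prod_{i}(s_i-2i+2)/(s_i-2i+1)$ factors cancelling top and bottom and with the extra factor $(n-2|S|)/(n-2|S|-1)$ from the new largest element $n\in S\cup\{n\}$ cancelling the $(n-2|S|-1)^2$ appearing in the squared coefficient.

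The last two identities follow by inverting the $2\times 2$ map
\[
\begin{bmatrix}\widehat{f}(S)\\ \widehat{f}(S\cup\{n\})\end{bmatrix} = \frac{1}{\sqrt{n-2|S|}}\begin{bmatrix}\sqrt{n-k-|S|} & \sqrt{k-|S|}\\ -\sqrt{k-|S|} & \sqrt{n-k-|S|}\end{bmatrix}\begin{bmatrix}\widehat{f_0}(S)\\ \widehat{f_1}(S)\end{bmatrix},
\]
whose coefficient matrix is orthogonal, so its inverse is simply its transpose. I expect the main obstacle to be the inductive computation of the bottom block of $P^{\uparrow}_{n, |S|, k}\chi_{n, S}$: iterating the cross term $I_{\binom{n-1}{j}}$ in the block form of $P^{\uparrow}_{n, j}$ introduces a sum that one must guess (and then verify) telescopes to the clean multiplier $k-|S|$; the same phenomenon, in slightly more delicate form, is what produces the coefficient $(n-k-|S|)/(n-2|S|-1)$ in the $S\cup\{n\}$ case.
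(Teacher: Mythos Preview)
Your proposal is correct and follows essentially the same route as the paper: split along the $n$-th coordinate, use the recursive definition of $\chi_{n,S}$, and read off the coefficients via the norm ratios from Claim~\ref{claim:norm}, then invert the resulting orthogonal $2\times 2$ matrix. The only cosmetic difference is that you compute the block decomposition of $P^{\uparrow}_{n,|S|,k}\chi_{n,S}$ directly by induction on $k$ (your worry about the cross term is unfounded---the multiplier increments cleanly from $j-|S|$ to $j+1-|S|$ at each step), whereas the paper passes to the adjoint and uses the identity $(P^{\downarrow}_{n,k,i}f_1')(x)=(k-i)(P^{\downarrow}_{n-1,k-1,i}f_1)(x)$ for $x_n=0$; these are dual formulations of the same computation.
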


\begin{proof}
If $k = 0$, then $f = f_0$, and $|S|=0$, and the theorem holds.
Similarly, if $k = n$, then $f = f_1$ and $|S|=0$ and the theorem also holds.

If $|S| = k$, then $\widehat{f}(S) = \widehat{f_0}(S)$ as $\chi_{n, S}(x) = 0$ if $x_n = 1$, and again, the theorem holds.
If $|S| = n-k$, then recall that $P^{\uparrow}_{n-1, |S|, k}\chi_{n-1,S} = 0$ by Claim~\ref{claim:normofup}, and thus $\widehat{f}(S) = \widehat{f_1}(S)$, and again, the theorem holds.

Thus, we can assume that $f_0$ and $f_1$ are not empty functions, and that $S \in \mathcal{H}_{n-1, \min\{k-1, n-k-1\}}$, and in particular, has a corresponding vector in the orthogonal basis for both $A_{n-1, k}$ and $A_{n-1, k-1}$.

Let $f_0': {\binom{[n]}{k}} \rightarrow \R$ be defined by $f_0'(x) = f(x)$ if $x_n = 0$ and $f_0'(x) = 0$ otherwise.
Additionally, let $f_1': {\binom{[n]}{k}} \rightarrow \R$ be defined by $f_1'(x) = f(x)$ if $x_n = 1$, and $f_1'(x) = 0$ otherwise.

Let $i = |S|$.
Note that because $f = f'_0+f'_1$, for all $S$,
\begin{align}
\widehat{f}(S) &= \frac{1}{\left\|P^{\uparrow}_{n, i, k}\chi_{n,S}\right\|_2}\left\langle f_0', P^{\uparrow}_{n, i, k}\chi_{n,S}\right\rangle+\frac{1}{\left\|P^{\uparrow}_{n, i, k}\chi_{n,S}\right\|_2}\left\langle f_1', P^{\uparrow}_{n, i, k}\chi_{n, S}\right\rangle \label{eq:split}
\end{align}
We start with the first inequality.
We have that $\left\langle f_0', P^{\uparrow}_{n, i, k}\chi_{n,S}\right\rangle = \left\langle f_0, P^{\uparrow}_{n-1, i, k}\chi_{n-1, S}\right\rangle$, as $f_0'(x) = 0$ if $x_n = 0$.
Thus, the right-hand side of Eq.~\eqref{eq:split} is equal to
\begin{align}\
\frac{1}{\left\|P^{\uparrow}_{n, i, k}\chi_{n, S}\right\|_2}\left\langle f_0, P^{\uparrow}_{n-1, i, k}\chi_{n-1, S}\right\rangle&+\frac{1}{\left\|P^{\uparrow}_{n, i, k}\chi_{n, S}\right\|_2}\left\langle P^{\downarrow}_{n, k, i} f_1', \chi_{n, S}\right\rangle \nonumber \\
&=
\frac{\left\|P^{\uparrow}_{n-1, i, k}\chi_{n-1, S}\right\|_2\widehat{f_0}(S)}{\left\|P^{\uparrow}_{n, i, k}\chi_{n, S}\right\|_2}+\frac{k-i}{\left\|P^{\uparrow}_{n, i, k}\chi_{n, S}\right\|_2}\left\langle P^{\downarrow}_{n-1, k-1, i} f_1, \chi_{n-1, S}\right\rangle \nonumber \\
&=
\frac{\left\|P^{\uparrow}_{n-1, i, k}\chi_{n-1, S}\right\|_2\widehat{f_0}(S)}{\left\|P^{\uparrow}_{n, i, k}\chi_{n, S}\right\|_2}+\frac{k-i}{\left\|P^{\uparrow}_{n, i, k}\chi_{n, S}\right\|_2}\left\langle f_1,  P^{\uparrow}_{n-1, i, k-1} \chi_{n-1, S}\right\rangle \nonumber \\
&=
\frac{\left\|P^{\uparrow}_{n-1, i, k}\chi_{n-1, S}\right\|_2\widehat{f_0}(S)}{\left\|P^{\uparrow}_{n, i, k}\chi_{n, S}\right\|_2}+\frac{(k-i)\widehat{f_1}(S)\left\| P^{\uparrow}_{n-1, i, k-1} \chi_{n-1, S}\right\|_2}{\left\|P^{\uparrow}_{n, i, k}\chi_{n, S}\right\|_2} \label{eq:notin}
\end{align}
where the sequence of inequalities uses the fact that $\chi_{n, S}(x) = \chi_{n-1, S}(x)$ unless $x_n = 1$, in which case $\chi_{n, S}(x) = 0$.
The first equality also uses the fact that $\left(P^{\downarrow}_{n, k, i} f_1'\right)(x) = (k-i)\left(P^{\downarrow}_{n-1, k-1, i} f_1\right)(x)$ if $x_n = 0$.
By Claim~\ref{claim:norm}, Eq.~\eqref{eq:notin} is equal to
\begin{align*}
 \sqrt{\frac{n-k-i}{n-2i}}\widehat{f_0}(S)+\frac{(k-i)}{\sqrt{(n-2i)(k-i)}}\widehat{f_1}(S) = \frac{1}{\sqrt{n-2i}}\left(\sqrt{n-k-i}\widehat{f_0}(S)+\sqrt{k-i}\widehat{f_1}(S)\right)
\end{align*}
as desired.

Now, let $i = |S|+1$, and let $S' = S \cup \{n\}$.
To prove the second inequality, note that the right-hand side of Eq.~\eqref{eq:split} is equal to
\begin{align}\frac{1}{\left\|P^{\uparrow}_{n, i, k}\chi_{n, S'}\right\|_2}&\left\langle P^{\downarrow}_{n, k, i} f_0', \chi_{n, S'}\right\rangle+\frac{1}{\left\|P^{\uparrow}_{n, i, k}\chi_{n, S'}\right\|_2}\left\langle P^{\downarrow}_{n, k, i} f_1', \chi_{n, S'}\right\rangle \nonumber \\
&= \frac{-\left\langle P^{\downarrow}_{n-1, k, i} f_0, P^{\uparrow}_{n-1, i-1}\chi_{n-1, S'\backslash \{n\}}\right\rangle}{(n-2i+1)\left\|P^{\uparrow}_{n, i, k}\chi_{n, S'}\right\|_2} \nonumber \\
&+\frac{\frac{-(k-i)}{n-2i+1}\left\langle P^{\downarrow}_{n-1, k-1, i} f_1, P^{\uparrow}_{n-1, i-1}\chi_{n-1, S'\backslash \{n\}}\right\rangle+\left\langle P^{\downarrow}_{n-1, k-1, i-1} f_1, \chi_{n-1, S'\backslash \{n\}}\right\rangle}{\left\|P^{\uparrow}_{n, i, k}\chi_{n, S'}\right\|_2} \nonumber \\
&= \frac{-\left\langle P^{\downarrow}_{n-1, k, i-1} f_0,\chi_{n-1, S'\backslash \{n\}}\right\rangle}{(n-2i+1)\left\|P^{\uparrow}_{n, i, k}\chi_{n, S'}\right\|_2}
+\frac{n-k-1+1}{n-2i+1}\left\langle P^{\downarrow}_{n-1, k-1, i-1} f_1, \chi_{n-1, S'\backslash \{n\}}\right\rangle
\nonumber \\
&= \frac{-\left\|P^{\uparrow}_{n-1, i-1, k}\chi_{n, S'\backslash \{n\}}\right\|_2}{(n-2i+1)\left\|P^{\uparrow}_{n, i, k}\chi_{n, S'}\right\|_2}\widehat{f_0}(S'\backslash \{n\})+\frac{(n-k-i+1)\left\|P^{\uparrow}_{n-1, i-1, k-1}\chi_{n, S'\backslash \{n\}}\right\|_2}{(n-2i+1)\left\|P^{\uparrow}_{n, i, k}\chi_{n, S'}\right\|_2}\widehat{f_1}(S'\backslash \{n\})
 \label{eq:in}
\end{align}
where the first equality follows from the definition of $\chi_{n, S}$ along with the fact that $\left(P^{\downarrow}_{n, k, i} f_1'\right)(x) = (k-i)\left(P^{\downarrow}_{n-1, k-1, i} f_1\right)(x)$ if $x_n = 0$.
Again, by Claim~\ref{claim:norm}, Eq~\eqref{eq:in} is equal to
\begin{multline*}
\frac{-1}{n-2i+1}\sqrt{\frac{(n-2i+1)(n-2i+1)(k-i+1)}{n-2i+2}}\widehat{f_0}(S'\backslash \{n\}) + \frac{n-k-i+1}{n-2i+1}\sqrt{\frac{(n-2i+1)(n-2i+1)}{(n-2i+2)(n-k-i+1)}}\widehat{f_1}(S'\backslash \{n\}) \\
= \frac{1}{\sqrt{n-2i+2}}\left(-\sqrt{k-i+1}\widehat{f_0}(S'\backslash \{n\})+\sqrt{n-k-i+1}\widehat{f_1}(S'\backslash \{n\})\right)
\end{multline*}
as desired.
\end{proof}

Finally, we prove the following relationship between the Fourier coefficients of $f$ and the Fourier coefficients of restrictions of $f$.

\begin{corollary}\label{cor:structure}
Let $f: \binom{[n]}{k} \rightarrow \R$, and let $i \leq k$.
Then for all $S \in \mathcal{H}_{\min\{k, n-k\}}$,
\[
\widehat{f_0}(S)^2+\widehat{f_1}(S)^2 = \widehat{f}(S)^2+\widehat{f}(S\cup \{n\})^2.
\]
Additionally,
\[
\sum_{z \in \{0, 1\}^t} \widehat{f_z}(S)^2 = \sum_{T \subseteq [n]\backslash [n-t]} \widehat{f}(S \cup T)^2
\]
\end{corollary}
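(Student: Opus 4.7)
The plan is to prove the corollary in two stages: first establish the single-coordinate identity
\[
\widehat{f_0}(S)^2 + \widehat{f_1}(S)^2 = \widehat{f}(S)^2 + \widehat{f}(S \cup \{n\})^2,
\]
and then bootstrap to the summed identity by induction on $t$.

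For the single-coordinate identity I would read Theorem~\ref{thm:restrict} as saying that the pair $(\widehat{f_0}(S), \widehat{f_1}(S))$ is obtained from $(\widehat{f}(S), \widehat{f}(S \cup \{n\}))$ by applying the matrix
\[
\frac{1}{\sqrt{n-2|S|}} \begin{pmatrix} \sqrt{n-k-|S|} & -\sqrt{k-|S|} \\ \sqrt{k-|S|} & \sqrt{n-k-|S|} \end{pmatrix}.
\]
This matrix has orthonormal columns (each column has squared norm $\bigl((n-k-|S|) + (k-|S|)\bigr)/(n-2|S|) = 1$, and the inner product of the two columns vanishes), so squaring and summing the two formulas of Theorem~\ref{thm:restrict} immediately yields the desired identity, the cross terms $\pm 2\sqrt{(n-k-|S|)(k-|S|)}\widehat{f}(S)\widehat{f}(S\cup\{n\})$ cancelling. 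The degenerate cases $|S| = k$ or $|S| = n-k$ are handled by the convention $\widehat{f}(S) = 0$ outside $\mathcal{H}_{n, \min\{k, n-k\}}$, matched against the boundary cases already identified at the start of the proof of Theorem~\ref{thm:restrict} (where, e.g., $|S| = k$ forces $\widehat{f}(S) = \widehat{f_0}(S)$ and $\widehat{f_1}(S) = \widehat{f}(S\cup\{n\}) = 0$).

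For the summed identity I would induct on $t$, the base case $t = 0$ being trivial. For the inductive step, writing each $z \in \{0,1\}^t$ as $(z', b)$ with $z' \in \{0,1\}^{t-1}$ and $b \in \{0,1\}$, the definition of restriction gives $f_z = (f_b)_{z'}$, so
\[
\sum_{z \in \{0,1\}^t} \widehat{f_z}(S)^2 = \sum_{b \in \{0,1\}} \sum_{z' \in \{0,1\}^{t-1}} \widehat{(f_b)_{z'}}(S)^2.
\]
Applying the inductive hypothesis to each $f_b \colon \binom{[n-1]}{k-b} \to \R$ (noting that $S \subseteq [n-t] \subseteq [n-1]$) converts the inner sum into $\sum_{T' \subseteq [n-1]\setminus[n-t]} \widehat{f_b}(S \cup T')^2$. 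Then summing the single-coordinate identity (applied to $f$ with the subset $S \cup T'$ and splitting off coordinate $n$) over $T'$ gives
\[
\sum_{T' \subseteq [n-1]\setminus[n-t]} \bigl(\widehat{f}(S\cup T')^2 + \widehat{f}(S\cup T'\cup \{n\})^2\bigr) = \sum_{T \subseteq [n]\setminus [n-t]} \widehat{f}(S\cup T)^2,
\]
since every $T \subseteq \{n-t+1, \ldots, n\}$ is uniquely either $T'$ or $T' \cup \{n\}$.

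The main obstacle is purely bookkeeping: verifying that the restriction operator composes as $f_{(z',b)} = (f_b)_{z'}$ under the concatenation convention of the definition, tracking which coordinate is peeled off at each inductive stage, and ensuring the zero-extension convention absorbs all the cases where $S$, $S \cup T'$, or $S \cup T' \cup \{n\}$ fails to lie in the relevant $\mathcal{H}_{n', \min\{k', n'-k'\}}$. Once those are in place the calculation is pure substitution.
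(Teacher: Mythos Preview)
Your proposal is correct and is exactly the argument the paper has in mind: the paper's own proof is the single sentence ``The first statement follows from Theorem~\ref{thm:restrict}, and the second follows by induction,'' and you have faithfully unpacked both steps, including the observation that the $2\times 2$ transformation in Theorem~\ref{thm:restrict} is orthogonal and the correct bookkeeping $f_{(z',b)} = (f_b)_{z'}$ for the induction.
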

\begin{proof}
The first statement follows from Theorem~\ref{thm:restrict}, and the second follows by induction.
\end{proof}

\section{A Goldreich-Levin Theorem for the slice}

In this section, we prove Theorem~\ref{thm:glslice}, a Goldreich-Levin theorem for the slice.
Informally, this theorem states that the large Fourier coefficients of a function can be efficiently detected.
With the machinery of Corollary~\ref{cor:structure}, the algorithm and proof of correctness are essentially the same as for the Boolean hypercube.
However, some care is required in estimating quantities of the form
\[
\sum_{S \subseteq [n] \backslash [i]} \widehat{f}(U \cup S)^2
\]
for a given function $f$ and a given set $U \subseteq [i]$.
For completeness, we give the algorithm.

The algorithm uses a divide and conquer strategy due to Kushilevitz and Mansour~\cite{KM93}.

\begin{algorithm}[H]
\SetAlgoLined
 Let $L \gets \{\emptyset\}$

 \For{$i\gets 0$ \KwTo $n-1$}{
    \For{Each $U \in L$}{
	Estimate $\sum_{S \subseteq [n] \backslash [i]} \widehat{f}(U \cup S)^2$ by $w_1$

	Estimate $\sum_{S \subseteq [n] \backslash [i]} \widehat{f}(U \cup \{i+1\} \cup S)^2$ by $w_2$

	Remove $U$ from $L$.

	\If{$w_1 \geq \tau^2/2$}{
		Add $U$ to $L$
	}
	\If{$w_2 \geq \tau^2/2$}{
		Add $U \cup \{i+1\}$ to $L$
	}
	}
    }
 \caption{Goldreich-Levin for the slice}
\end{algorithm}

Much of the analysis is identical to the algorithm for the Boolean hypercube.
As in the case of the Boolean hypercube, the algorithm identifies a set of Fourier coefficients with large total weight, starting with the set of all Fourier coefficients.
At each step, it splits each set into two, estimates the total weights of all of the subsets, and removes those subsets with small total weight.
If the algorithm is able to estimate the total weight accurately, then at any given point, it will only identify $\poly(\tau)$ different sets.
(For a complete analysis, one can also refer to Section 3.5 of~\cite{O14}).

Thus, it is enough to show that one can estimate $\sum_{S \subseteq [n] \backslash [i]} \widehat{f}(U \cup S)^2$ and $\sum_{S \subseteq [n] \backslash [i]} \widehat{f}(U \cup \{i\} \cup S)^2$ in $\poly(n, 1/\tau)$-time to error $(\tau^2/2) \binom{n}{k}$ with probability at least $1-1/\poly(n, 1/\tau)$.

\begin{theorem}
Given query access to a function $f: {\binom{[n]}{k}} \rightarrow \{-1, 1\}$ and a set $U \subseteq [i]$ for some integer $i$, then $\poly(n, 1/\eps, 1/\tau)$ samples are sufficient to compute $\sum_{S \in [n] \backslash [i]} \widehat{f}(U \cup S)^2$ up to error $\eps \binom{[n]}{k}$ with probability at least $1-1/\poly(n, 1/\tau)$.
\end{theorem}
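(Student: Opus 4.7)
The plan is to reduce, via Corollary~\ref{cor:structure}, to estimating the expectation of a random variable with controlled second moment, and then finish with Chebyshev and median-of-means. Applying Corollary~\ref{cor:structure} with $t = n-i$ rewrites
\[
\sum_{S \subseteq [n]\setminus [i]} \widehat{f}(U \cup S)^2 \;=\; \sum_{z \in \{0,1\}^{n-i}} \widehat{f_z}(U)^2.
\]
Let $\psi_U^{(z)} := P^{\uparrow}_{i,|U|,k-|z|}\chi_{i,U} / \|P^{\uparrow}_{i,|U|,k-|z|}\chi_{i,U}\|_2$ denote the $\ell_2$-normalized basis vector on $\binom{[i]}{k-|z|}$ (and set $\psi_U^{(z)} \equiv 0$ whenever the underlying basis vector is undefined, so that $\widehat{f_z}(U) = 0$ in those cases). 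Writing $\widehat{f_z}(U) = \langle f_z,\psi_U^{(z)}\rangle$ and expanding the square gives
\[
\sum_z \widehat{f_z}(U)^2 \;=\; \sum_z \sum_{x_1, x_2 \in \binom{[i]}{k-|z|}} f(x_1 \circ z)\, f(x_2 \circ z)\, \psi_U^{(z)}(x_1)\,\psi_U^{(z)}(x_2).
\]

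I would estimate this with the following sampler: draw $w_1$ uniformly from $\binom{[n]}{k}$, then draw $w_2$ uniformly conditioned on $w_1[i+1{:}n] = w_2[i+1{:}n]$. Writing $w_j = x_j \circ z$, the joint distribution puts mass $1/(\binom{n}{k}\binom{i}{k-|z|})$ on each triple $(x_1,x_2,z)$, so the random variable
\[
Y \;:=\; \binom{n}{k}\binom{i}{k-|z|}\, f(w_1)\,f(w_2)\, \psi_U^{(z)}(x_1)\,\psi_U^{(z)}(x_2)
\]
satisfies $\E[Y] = \sum_z \widehat{f_z}(U)^2$. The crucial second-moment calculation uses that $\psi_U^{(z)}$ has unit $\ell_2$-norm: conditional on $z$, $\E_{x_1,x_2 \sim \binom{[i]}{k-|z|}}[\psi_U^{(z)}(x_1)^2 \psi_U^{(z)}(x_2)^2] = 1/\binom{i}{k-|z|}^2$, and together with $f^2 \equiv 1$ and Vandermonde's identity $\sum_z \binom{i}{k-|z|} = \binom{n}{k}$ this yields $\E[Y^2] \leq \binom{n}{k}^2$. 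Chebyshev's inequality then implies the empirical mean over $N = O(1/\eps^2)$ independent copies of $Y$ is within $\eps\binom{n}{k}$ of $\E[Y]$ with constant probability, and median-of-means boosts this to probability $1 - 1/\poly(n,1/\tau)$ at the cost of an extra $O(\log(n/\tau))$ factor in the sample count.

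Each draw can in fact be evaluated in polynomial time: the normalizing constant $\|P^{\uparrow}_{i,|U|,k-|z|}\chi_{i,U}\|_2$ is given explicitly by Claim~\ref{claim:norm}, and $\psi_U^{(z)}(x)$ itself can be computed from the explicit polynomial descriptions of the slice basis in~\cite{F16, S11}. The main technical difficulty of the argument is the second-moment bound: individual values $\psi_U^{(z)}(x)$ can be of constant order while the combinatorial prefactor $\binom{n}{k}\binom{i}{k-|z|}$ in $Y$ is exponentially large, so a trivial $|Y|$-bound is hopelessly loose. What saves the argument is the unit $\ell_2$-normalization of $\psi_U^{(z)}$, which collapses $\E[Y^2]$ down to $\binom{n}{k}^2$, precisely the scale needed for the target error $\eps\binom{n}{k}$ to be achievable with $\poly(n,1/\eps,1/\tau)$ samples.
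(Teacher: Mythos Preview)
Your proof is correct but takes a genuinely different route from the paper's. Both begin with Corollary~\ref{cor:structure} to rewrite the target sum as $\sum_{z}\widehat{f_z}(U)^2$, and both sample $z$ with the natural weight $\binom{i}{k-|z|}/\binom{n}{k}$ (your uniform draw of $w_1$ from $\binom{[n]}{k}$ induces exactly this marginal). The divergence is in how the inner product $\langle f_z,\psi_U^{(z)}\rangle^2$ is estimated. You sample $x_1,x_2$ \emph{uniformly} from $\binom{[i]}{k-|z|}$, obtaining an estimator $Y$ that is unbounded pointwise but whose second moment collapses to $\binom{n}{k}^2$ thanks to the unit $\ell_2$-normalization of $\psi_U^{(z)}$; you then finish with Chebyshev and median-of-means. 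The paper instead uses \emph{importance sampling}: it draws $y_1,y_2$ proportionally to $|\psi_U^{(z)}(y)|$, which yields an estimator that is pointwise bounded by $1$ because $\|\psi\|_1^2/\|\psi\|_2^2 \le \binom{i}{k-|z|}$ by Claim~\ref{claim:l1vsl2}, and then applies a Chernoff bound directly.

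What each buys: your uniform sampler is simpler to implement (no need to compute $\|\psi_U^{(z)}\|_1$ or sample from a nonuniform distribution on the slice), and the second-moment calculation is clean. The paper's importance-sampling estimator is bounded, so Hoeffding applies without the median-of-means layer, and the sample count is $O(\log n/\eps^2)$ rather than $O(\log(n/\tau)/\eps^2)$; on the other hand it silently assumes one can efficiently sample from $|\psi_U^{(z)}|/\|\psi_U^{(z)}\|_1$, which your approach sidesteps. Both approaches need to evaluate individual entries $\psi_U^{(z)}(x)$, which you correctly note is polynomial-time via the explicit polynomial form of the basis.
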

\begin{proof}
By Corollary~\ref{cor:structure}, we have
\begin{equation}
\frac{1}{\binom{n}{k}}  \sum_{S \subseteq [n]\backslash [i]} \widehat{f}(U \cup S)^2 = \sum_{z \in \{0, 1\}^{n-i}} \frac{\binom{i}{k-|z|}}{\binom{n}{k}} \frac{1}{\binom{i}{k-|z|}}\widehat{f_z}(U)^2. \label{eq:restrict}
\end{equation}
Let $x$ be a random variable over $\{0, 1\}^{n-i}$ where $x = z$ with probability $\frac{\binom{i}{k-|z|}}{\binom{n}{k}}$.
Note that such a probability distribution is well-defined, as $\sum_{z \in \{0, 1\}^{n-i}} \binom{i}{k-|z|} = \binom{n}{k}$.
Then the right-hand side of Eq.~\eqref{eq:restrict} can be rewritten as
\[
\E_{x}\left[\frac{\widehat{f_x}(S)^2}{\binom{i}{k-|x|}}\right] = \E_{x}\left[\frac{1}{\left\|P^{\uparrow}_{i, |S|, k-|x|}\chi_{i, S}\right\|_2^2}\frac{\langle f_x, P^{\uparrow}_{i, |S|, k-|x|}\chi_{i, S} \rangle^2}{\binom{i}{k-|x|}}\right].
\]
For each $x$ in $\{0, 1\}^{n-i}$, let $y_1$ and $y_2$ be independent random variables over $\binom{[i]}{k-|x|}$ distributed according to the absolute value of the coordinates $P^{\uparrow}_{i, |S|, k-|x|}\chi_{i, S}$.
In particular, we let the probability that $y_1 = T$ be $\frac{\left|P^{\uparrow}_{i, |S|, k-|x|}\chi_{i, S}\right|}{\left\|P^{\uparrow}_{i, |S|, k-|x|}\chi_{i, S}\right\|_1}$
and thus we obtain
\[
\E_{x}\left[\frac{\widehat{f_x}(S)^2}{\binom{i}{k-|x|}}\right] = \E_{x, y_1, y_2}\left[\frac{\left\|P^{\uparrow}_{i, |S|, k-|x|}\chi_{i, S}\right\|_1^2}{\left\|P^{\uparrow}_{i, |S|, k-|x|}\chi_{i, S}\right\|_2^2}\frac{f_z(y_1)f_z(y_2)\sign\left(\left(P^{\uparrow}_{i, |S|, k-|x|}\chi_{i, S}\right)_{y_1}\right)\sign\left(\left(P^{\uparrow}_{i, |S|, k-|x|}\chi_{i, S}\right)_{y_2}\right)}{\binom{i}{k-|x|}}\right]
\]
By Claim~\ref{claim:l1vsl2}, and the fact that $f(S) \in \{-1, 1\}$ for all $S$, it follows that the random variable inside the expectation is bounded above by $1$.
Thus, by a Chernoff bound, $O(\log(n)/\eps^2)$ samples are sufficient to compute $\sum_{S \subseteq [n] \backslash [i]} \widehat{f}(U \cup S)^2$ up to error $\eps \binom{n}{k}$ with probability at least $1-1/\poly(n, 1/\tau)$.
\end{proof}

\bibliographystyle{alphaabbrv}
\bibliography{slice}

\end{document}